\def \Vol {\mathrm{Vol}}
\def \Area {\mathrm{Area}}
\def \Ric {\text{\rm Ric}}
\def \RR {\mathbb R}
\def \eps {\varepsilon}
\def \vphi {\varphi}
\def \cM {\mathcal M}
\def \cL {\mathcal L}
\def \cF {\mathcal F}
\def \cU {\mathcal U}
\def \cL {\mathcal L}
\def \arccot {{ \rm arccot}}
\newtheorem{theorem}{Theorem}[section]
\newtheorem*{theorem*}{theorem}
\newtheorem{definition}[theorem]{Definition}
\newtheorem{lemma}[theorem]{Lemma}
\newtheorem{proposition}[theorem]{Proposition}
\newtheorem*{proposition*}{Proposition}
\newtheorem{corollary}[theorem]{Corollary}
\newtheorem{example}[theorem]{Example}
\theoremstyle{definition}
\newtheorem{remark}[theorem]{Remark}
\def\myffrac#1#2 in #3{\raise 2.6pt\hbox{$#3 #1$}\mkern-1.5mu\raise 0.8pt\hbox{$
		#3/$}\mkern-1.1mu\lower 1.5pt\hbox{$#3 #2$}}
\def\qed{\hfill $\vcenter{\hrule height .3mm
		\hbox {\vrule width .3mm height 2.1mm \kern 2mm \vrule width .3mm
			height 2.1mm} \hrule height .3mm}$ \bigskip}
\def \susbeteq {\subseteq}
\def \ola {\overleftarrow}
\title{Brunn-Minkowski inequalities for sprays on surfaces}
\author{Rotem Assouline}
\date{}
\begin{document}

\maketitle

\begin{abstract}
	We propose a generalization of the Minkowski average of two subsets of a Riemannian manifold, in which geodesics are replaced by an arbitrary family of parametrized curves. Under certain assumptions, we characterize families of curves on a Riemannian surface for which a Brunn-Minkowski inequality holds with respect to a given volume form. In particular, we prove that under these assumptions, a family of constant-speed curves on a Riemannian surface satisfies the Brunn-Minkowski inequality with respect to the Riemannian area form if and only if the geodesic curvature of its members is determined by a function $\kappa$ on the surface, and $\kappa$ satisfies the inequality $$K + \kappa^2 - |\nabla\kappa| \ge 0$$ where $K$ is the Gauss curvature.
\end{abstract}

\section{Introduction}

The Brunn-Minkowski inequality asserts that for every $A,B\subseteq \RR^n$, Borel measurable and nonempty, and for every $0 < \lambda < 1$,
\begin{equation}\label{BM} \Vol_n( (1-\lambda) A + \lambda B )^{1/n} \geq (1-\lambda) \cdot \Vol_n(A)^{1/n} + \lambda \cdot \Vol_n(B)^{1/n}. 
\end{equation}
Here, $\Vol_n$ denotes the Lebesgue measure, and $$(1-\lambda) A + \lambda B = \{(1-\lambda)a + \lambda b \mid a \in A , b \in B\}.$$ For background on the classical Brunn-Minkowski inequality, see Schneider \cite[Section 7.1]{schneider}. The Brunn-Minkowski inequality admits a generalization to Riemannian manifolds, first proved in its stronger, functional version (the Borell-Brascamp-Lieb inequality) by Cordero-Erausquin, McCann and Schmuckenschl\"ager \cite{CMS}. It states that if $M$ is a complete, $n$-dimensional Riemannian manifold with nonnegative Ricci curvature, then \eqref{BM} holds true, with $\Vol_n$ replaced by Riemannian volume, and with the set $(1-\lambda)A + \lambda B$ naturally replaced by the set of points of the form $\gamma(\lambda)$, where $\gamma:[0,1] \to M$ is a constant-speed minimizing geodesic joining the set $A$ to the set $B$ (for example, if $\lambda = 1/2$ then this equals the set of midpoints of geodesic segments joining the two sets). The Riemannian Brunn-Minkowski inequality in this formulation first appeared in Sturm \cite{sturm}. The validity of the Brunn-Minkowski inequality for all $A,B$ and $\lambda$ is in fact equivalent to nonnegative Ricci curvature, see \cite{MPR}.

\medskip
The Brunn-Minkowski inequality combines two elements: volume and Minkowski summation. There is an extensive body of research in which the Riemannian volume form is replaced by an arbitrary measure $\mu$ with a smooth density, see \cite{BGL,LV}. The Brunn-Minkowski inequality, with the exponent $1/n$ replaced by $1/N$ for some $N\in(-\infty,0]\cup(n,\infty]$, then follows from nonnegativity of the generalized Ricci tensor $\Ric_{g,\mu,N}$, see \cite{sturm,Oh16,Oh}.

\medskip
In the present paper, we focus on the second element of the Brunn-Minkowski inequality: Minkowski summation. Instead of interpolating between two sets using geodesics, we propose to use an arbitrary family of curves. Restricting to the two-dimensional case, we seek to characterize families of curves for which a Brunn-Minkowski inequality analogous to \eqref{BM} holds.

\medskip
Let $(M,g)$ be a two-dimensional oriented Riemannian manifold. Fix a family $\Gamma$ of smooth unit-speed curves on $M$, with the property that for any unit vector $v \in SM$, there is a unique curve $\gamma \in \Gamma$ with $\dot\gamma(0) = v$. Here $SM$ is the unit tangent bundle of $M$. Equivalently, fix a function ${k} : SM \to \RR$ and let $\Gamma$ be the family of solutions to the ordinary differential equation
	$$\nabla_{\dot\gamma}\dot\gamma = {k}(\dot\gamma)\dot\gamma^\perp,$$
where $^\perp$ denotes rotation by $\pi/2$ in the positive direction. Thus ${k}$ prescribes the geodesic curvature of a curve $\gamma \in \Gamma$. Include in the family also all constant-speed orientation-preserving reparametrizations of the unit-speed curves above. Such a family of curves is uniquely determined by a vector field on $TM$, called a \emph{spray}, which we shall also denote by $\Gamma$. The curves are then called $\Gamma$-\emph{geodesics}. The precise definition of a spray, as well as definitions of the technical assumptions on $\Gamma$ appearing in the formulation of Theorem \ref{mainthm} below, are given in Section \ref{prelsec}.

\medskip
For two subsets $A,B \subseteq M$, denote by $\cM_\Gamma(A,B;\lambda)$ the set of points of the form $\gamma(\lambda)$, where $\gamma$ is a $\Gamma$-geodesic satisfying $\gamma(0) \in A$ and $\gamma(1) \in B$. This is a generalization of the Minkowski average  $(1-\lambda)A + \lambda B$ of two sets $A,B \subseteq\RR^2$, since we can take our family to be all constant-speed lines on the Euclidean plane. We prove the following:	 

\begin{theorem}\label{mainthm}
	Let $\Gamma$ be a simple, proper metric spray on $M$. The following are equivalent:
	\begin{enumerate}
		\item The function ${k}$ is independent of the direction, i.e. there exists a smooth function $\kappa: M \to \RR$ such that ${k} = \kappa\circ\pi$ where $\pi: SM \to M$ is the bundle projection, and moreover 
		\begin{equation}\label{mainthmeq}K + \kappa^2 - |\nabla\kappa| \ge 0,\end{equation}
		where $K : M \to \RR$ is the Gauss curvature of $g$.
		\item For every pair of Borel, nonempty subsets $A,B \subseteq M$ and every $0 < \lambda < 1$,
		$$\Area(\cM_\Gamma(A,B;\lambda))^{1/2} \ge (1-\lambda)\cdot \Area(A)^{1/2} + \lambda \cdot \Area(B)^{1/2},$$
		where $\Area$ denotes Riemannian area.
	\end{enumerate}
\end{theorem}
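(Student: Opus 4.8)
The plan is to adapt the optimal-transport proof of the Riemannian Brunn--Minkowski inequality due to Cordero-Erausquin--McCann--Schmuckenschl\"ager and Sturm, replacing Riemannian geodesics by $\Gamma$-geodesics and reducing the inequality to a pointwise concavity statement for the Jacobian of the $\Gamma$-interpolation. For the implication (1)$\Rightarrow$(2), I would use the metric structure of $\Gamma$ to produce a transport map $T\colon A\to B$ carrying the normalized area of $A$ to the normalized area of $B$ along $\Gamma$-geodesics: for almost every $a\in A$ there is a distinguished $\Gamma$-geodesic $\gamma_a$ with $\gamma_a(0)=a$, $\gamma_a(1)=T(a)$, and I set $T_\lambda(a):=\gamma_a(\lambda)$, so that $T_\lambda(A)\subseteq\cM_\Gamma(A,B;\lambda)$. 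Since the source and target densities are constant, the Monge--Amp\`ere equation gives $\det dT\equiv \Area(B)/\Area(A)$, while $\det dT_0\equiv 1$. Writing $\cJ_\lambda(a)=\det dT_\lambda|_a$ and using that the \emph{simple} and \emph{proper} hypotheses make $T_\lambda$ injective off a null set (unique connecting geodesics and no $\Gamma$-conjugate points before the endpoint), the area formula yields
\[ \Area(\cM_\Gamma(A,B;\lambda))\ \ge\ \int_A \cJ_\lambda\,d\Area. \]
The inequality would then follow from the pointwise concavity $\cJ_\lambda(a)^{1/2}\ge (1-\lambda)\cJ_0(a)^{1/2}+\lambda\cJ_1(a)^{1/2}$, since integrating and inserting the constant values of $\cJ_0,\cJ_1$ gives exactly $\big[(1-\lambda)\Area(A)^{1/2}+\lambda\Area(B)^{1/2}\big]^2$.

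The heart of the matter is this concavity, which I would reduce to a Jacobi analysis for the spray. Because a constant-speed reparametrization preserves the trace and its geodesic curvature, the function $k$ is homogeneous of degree one in the velocity and the unit-speed trace of $\gamma_a$ carries a well-defined geodesic curvature, the speed only fixing where along the trace the $\lambda$-point lies. Along such a unit-speed trace, with $T=\dot\gamma$, $N=\dot\gamma^\perp$ and $\nabla_T T=\kappa N$, $\nabla_T N=-\kappa T$ when $k=\kappa\circ\pi$, linearizing the spray equation produces a scalar equation $\ddot y+\cK\,y=0$ for the normal component $y$ of a $\Gamma$-Jacobi field, with effective curvature $\cK=K+\kappa^2+\langle\nabla\kappa,N\rangle$. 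In dimension two the interpolation Jacobian is governed by this normal field, and a direct Riccati computation shows that $\cJ_\lambda^{1/2}$ is concave whenever $\cK\ge 0$ along the trace, while failure of $\cK\ge0$ somewhere lets one build a local configuration violating the concavity. As $N$ ranges over all unit normals, $\langle\nabla\kappa,N\rangle$ ranges over $[-|\nabla\kappa|,|\nabla\kappa|]$, so demanding $\cK\ge 0$ for the $\Gamma$-geodesics in every direction is exactly the condition $K+\kappa^2-|\nabla\kappa|\ge 0$ of \eqref{mainthmeq}.

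For the converse (2)$\Rightarrow$(1), I would localize the inequality. Applying it to pairs of sets shrinking to a point $p$ along a direction $\theta$ forces, in the limit, the concavity of $\cJ_\lambda^{1/2}$ along every $\Gamma$-geodesic through every $(p,\theta)\in SM$, and hence $\cK\ge 0$ in every direction. For a direction-\emph{dependent} $k=k(x,\theta)$, the same linearization now yields a scalar equation for $y$ that also contains the angular derivatives of $k$, in particular a first-order (damping) term carrying $\partial_\theta k$; the point is that this term destroys the clean concavity of the geometric Jacobian $\cJ_\lambda^{1/2}$ for suitable local configurations, so that compatibility with the inequality for all small $A,B$ forces $\partial_\theta k\equiv 0$, i.e. $k=\kappa\circ\pi$. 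Once direction-independence is established, the surviving requirement is again $\cK=K+\kappa^2+\langle\nabla\kappa,N\rangle\ge 0$ for all $N$, which is \eqref{mainthmeq}.

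The step I expect to be the main obstacle is the precise derivation and bookkeeping of the spray Jacobi equation: identifying the coefficient of the first-order term and confirming that, for $k=\kappa\circ\pi$, the zeroth-order coefficient is exactly $K+\kappa^2+\langle\nabla\kappa,N\rangle$ (so that the worst direction produces the $-|\nabla\kappa|$ in \eqref{mainthmeq}), and that the angular-derivative terms are genuinely the obstruction to direction-independence. Closely tied to this is the analytic infrastructure behind the transport step -- existence, essential uniqueness, and $C^1$-regularity of the $\Gamma$-transport map, together with the absence of $\Gamma$-conjugate points that guarantees injectivity of $T_\lambda$ and legitimizes the change of variables. This is precisely where the \emph{metric}, \emph{simple}, and \emph{proper} hypotheses on $\Gamma$ must be brought to bear.
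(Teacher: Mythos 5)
Your proposal for (1)$\Rightarrow$(2) follows the Cordero-Erausquin--McCann--Schmuckenschl\"ager/Sturm $L^2$-transport route, whereas the paper uses Klartag-style $L^1$ needle decomposition; unfortunately, your route has a genuine gap at exactly the point where the paper has to work hardest. You invoke ``the metric structure of $\Gamma$'' to produce a transport map $T$ along $\Gamma$-geodesics satisfying a Monge--Amp\`ere equation, with $T_\lambda$ injective and $\cJ_\lambda^{1/2}$ concave in $\lambda$. But \emph{metric} here only means that $\Gamma$-geodesics have constant $g$-speed; they are not $g$-geodesics, so Riemannian optimal transport for $g$ produces nothing moving along $\Gamma$-geodesics, and for a general spray there is no Brenier/McCann theory at all. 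The properties you need from $T$ -- a.e.\ differentiability, essential injectivity of $T_\lambda$, and above all the structure of $dT$ that makes the Jacobian interpolation behave -- all come, in the classical proof, from $c$-concavity of a Kantorovich potential for a cost built from a distance whose minimizing geodesics are the interpolating curves. Indeed, the pointwise concavity you rely on is simply false for an arbitrary area-preserving map along $\Gamma$-geodesics: already for the flat spray on $\RR^2$, taking $T$ to be a rotation by $\pi/2$ gives $\det\bigl((1-\lambda)I + \lambda\, dT\bigr) = (1-\lambda)^2 + \lambda^2 < 1$, so $\cJ_\lambda^{1/2}$ is strictly convex rather than concave; the Minkowski determinant inequality needs $dT$ symmetric positive semidefinite, which is exactly what optimality of $T$ provides and what you never secure. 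The scalar Jacobi/Riccati analysis (your effective curvature $K+\kappa^2\pm\langle\nabla\kappa,N\rangle$ agrees, up to sign conventions, with the paper's $Q$) controls a \emph{single} transversal Jacobi field paired with $\dot\gamma$, not the determinant of a $2\times 2$ matrix of Jacobi fields, so it cannot substitute for this missing structure.

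This is precisely why the paper first proves that a simple, proper magnetic spray is projectively Finsler-metrizable (Proposition \ref{metrizabilityppn}), constructing a Randers metric $\cF = \sqrt{g}-\eta$ with $d\eta = \kappa\,\omega_g$, $|\eta|_g<1$, for which the $\Gamma$-geodesics are the minimizing geodesics; only then does transport-based machinery (there, the needle decomposition of Theorem \ref{needledecomp}, which localizes the inequality to one-dimensional needles with concave density and thereby avoids any matrix determinant argument) become available. Your plan could in principle be repaired by inserting this metrization step and then running Finsler optimal transport \`a la Ohta for the resulting weighted Randers manifold, but as written the central analytic step is unsupported and, for non-optimal $T$, false. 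Your converse direction (2)$\Rightarrow$(1) is essentially sound and matches the paper's argument (Proposition \ref{conversecor} combined with Corollary \ref{nncriemcor}): localizing the inequality to thin sets along a variation forces concavity of the Jacobian, and the first-order term carrying the angular derivative of $k$ is exactly what forces $k$ to be direction-independent.
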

Theorem \ref{mainthm} is a special case of Corollary \ref{simplemetricBMcor}, in which the Riemannian area form is replaced by an arbitrary volume form on $M$.

\medskip
Theorem \ref{mainthm} generalizes the horocyclic Brunn-Minkowski inequality in the hyperbolic plane which was proved in \cite{AK}, as well as the classical Brunn-Minkowski in $\RR^2$. Moreover, by setting $\kappa \equiv 0$, we recover the equivalence between the (geodesic) Brunn-Minkowski inequality and the nonnegativity of the Gauss curvature (at least in the case where the geodesic spray is simple and proper). More examples are given in Section \ref{examplesec}. 

\medskip
The main tool in the proof of the (harder) direction $1 \implies 2$ in Theorem \ref{mainthm} is the needle decomposition technique, suggested and developed in the Riemannian setting by Klartag \cite{K1}. For our purposes we use a needle decomposition theorem for geodesically-convex Finsler surfaces whose proof can be found in \cite{AK} and is based on the proofs found in Klartag \cite{K1}, Ohta \cite{Oh} and Cavalletti and Mondino \cite{CM1}. The idea behind the needle decomposition technique is to decompose the space into $\Gamma$-geodesic arcs, localize the desired inequality into a one-dimensional inequality on each arc, and integrate the localized inequality. The technique employs $L^1$-mass transport, and requires the $\Gamma$-geodesics to be minimizing geodesics of some Finsler metric, possibly after reparametrization. Fortunately, simple, proper metric sprays satisfying the Brunn-Minkowski inequality with respect to some volume form do have this property. Simplicity and properness are used mainly in this step; they can probably be replaced by other, possibly weaker, topological assumptions on the spray.

\medskip
The fact that $\dim M = 2$ is used in several places. Firstly, our local analysis of the spray and derivation of condition \eqref{mainthmeq} is two-dimensional, but we believe that such a computation can be carried out in higher dimensions, at least for some classes of sprays. In a forthcoming paper we analyze magnetic sprays on manifolds of arbitrary dimensions, i.e. solutions to ODEs of the form $\nabla_{\dot\gamma}\dot\gamma = |\dot\gamma|\mathrm{Y}\dot\gamma$, where $\mathrm{Y}$ is linear. Secondly, and most significantly, we rely on projective Finsler-metrizability of the spray. In dimension two, Brunn-Minkowski implies that the spray is magnetic, which in turn enables us to Finsler-metrize it. We do not know what the situation is in this regard in higher dimensions. Of course, there might be an approach to proving the Brunn-Minkowski inequality for sprays which does not require projective Finsler-metrizability. Finally, the needle decomposition theorem \ref{needledecomp} was proved in \cite{AK} in dimension two. But most of the steps in the proof presented in \cite{AK} were already carried out in \cite{K1,Oh} in general dimension, so we believe that it doesn't take much to extend it to arbitrary dimension.

\medskip
The paper is organized as follows. In Section \ref{prelsec} we first recall some basic facts about Riemannian geometry of surfaces and about sprays. We then prove Proposition \ref{metrizabilityppn} regarding projective Finsler-metrizability of magnetic sprays. Finally, we give the necessary background and relevant results on needle decomposition. In Section \ref{weightedsec} we introduce the notion of a {\it nonnegatively curved weighted spray space}, and give a characterization of such spaces in the case of a metric spray on a Riemannian surface. We also mention an analogue of the curvature-dimension condition from the theory of metric measure spaces \cite{sturm,LV} in the setting of sprays on surfaces. In Section \ref{BMsec} we prove the equivalence between the nonnegative curvature condition and the Brunn-Minkowski inequality in the case of simple, proper metric sprays (and, more generally, in the case of projectively Finsler-metrizable sprays on surfaces). In Section \ref{examplesec} we provide some examples of weighted spray spaces satisfying the Brunn-Minkowski inequality.

\medskip
{\it Acknowledgements.}  I would like to express my deep gratitude to my Ph.D. advisor, prof. Bo'az Klartag, for his close guidance and support. I would also like to thank the referee for a thorough reading of the manuscript and for useful comments.

\medskip
The work is part of the author's Ph.D. research conducted at the department of mathematics at the Weizmann Institute, Rehovot, Israel. Some of the work was done during a semester program held at ICERM, Brown University, Providence, RI. Supported by a grant from the Israel Science Foundation (ISF).

\section{Preliminaries}\label{prelsec}

By a \emph{Riemannian surface} we mean a two-dimensional oriented Riemannian manifold. For a tangent vector $v \in TM$ we denote by $v^\perp$ the vector $v$ rotated by $\pi/2$ in the positive direction, so that $v$ and $v^\perp$ have the same norm and $(v,v^\perp)$ is an oriented orthogonal basis of the tangent space. We will occasionally use the Hodge star $\star$, which for one-forms on a Riemannian surface is simply the operator 
$$\star \eta = ((\eta^\sharp)^\perp)^\flat,$$
where $\sharp$ and $\flat$ are the musical isomorphisms. 

\medskip
We shall use the letter $\pi$ to denote both the bundle projection from $TM$ to $M$ and the bundle projection from $TTM$ to $TM$. Thus from $TTM$ to $TM$ we have two canonical maps: the bundle projection $\pi$, and the differential $d\pi$ of the bundle projection $\pi: TM\to M$. 

\medskip
We recall some basic facts about Riemannian surfaces which can be found in \cite{ST}.  The unit tangent bundle $SM$ of a Riemannian surface is the subbundle of $TM$ consisting of unit vectors,
$$SM : = \{v \in TM \mid |v|_g = 1\}.$$

\medskip
The unit tangent bundle admits a natural global frame $(E_1,E_2,V)$. The flow of the vector field $E_1$ is the geodesic flow on $SM$,  the flow of $V$ restricts to rotation of each tangent circle (chosen according to the orientation of $M$), and we have the commutation relations
\begin{equation}\label{commuterel}[V,E_1] = E_2, \qquad [V,E_2] = - E_1, \qquad [E_1,E_2] = KV.\end{equation}
The dual coframe $(\theta_1,\theta_2,\psi)$ satisfies the relations
\begin{equation}\label{theta12def} d\pi(\xi) = \theta_1(\xi)\pi(\xi) + \theta_2(\xi)\pi(\xi)^\perp, \qquad \xi \in TSM,\end{equation}
and the structure equations
\begin{align}\label{structure}
	d\theta_1 = \psi\wedge\theta_2, \qquad d\theta_2 = -\psi\wedge\theta_1, \qquad d\psi = - K \, \theta_1 \wedge \theta_2.
\end{align}
Here $K$ is the Gauss curvature of the surface, which we view as a function on $SM$ which is constant on each fiber. The Riemannian volume form, denoted by $\omega_g$, satisfies
\begin{align}\label{omegagpullback}
	\pi^*\omega_g = \theta_1\wedge\theta_2.
\end{align}

It is useful for us to consider the extensions of these objects to the full tangent bundle $TM$. On $TM$ we have the canonical radial vector field $R$ (sometimes called the Liouville vector field), which is the infinitesimal generator of the flow $v \mapsto e^t v$. Alternatively, for $v \in T_xM$, if by abuse of notation we identify $T_xM$ with a subspace of $T_vTM$ via translation, then $$R\vert_v = v.$$
We again denote by $E_1$ the infinitesimal generator of the geodesic flow on $TM$, and by $V$ the infinitesimal generator of rotation in the positive direction on each fiber of $TM$. Using \eqref{commuterel}, \eqref{structure} and the homogeneity of $E_1$ and $V$, one can easily prove that on the full tangent bundle we have the commutation relations 
\begin{align*}
	[V,E_1] & = E_2, & [V,E_2] & = - E_1, & [E_1,E_2] & = KV,\\
	[R,E_1] & = E_1, & [R,E_2] & = E_2, & [R,V] & = 0,
\end{align*} 
and the dual coframe $(\theta_1,\theta_2,\psi,\rho)$ satisfies \eqref{theta12def} as well as the structure equations
\begin{align}\label{structureTM}
\begin{split}
	d\theta_1 & = \theta_1\wedge\rho + \psi\wedge\theta_2,\\
	d\theta_2 & = \theta_2\wedge\rho - \psi\wedge\theta_1,\\
	d\psi & = - K  \, \theta_1\wedge\theta_2,\\
	d\rho & = 0.
\end{split}
\end{align}
Here (and from now on) we extend the Gauss curvature $K$ to a 2-homogeneous function on $TM$ whose value on $SM$ coincides with the usual Gauss curvature. 

\medskip
Since the metric $g$, viewed as a function on $TM$, $v \mapsto g(v,v)$, is fiberwise 2-homogeneous and is invariant under the flows of $E_1,E_2$ and $V$, we have
$$dg = 2g\rho.$$

The following lemma is proved easily using the formula relating the Christoffel symbols of conformal metrics. Observe that two conformal metrics have the same vector field $V$.
\begin{lemma}\label{conformalspraylemma}
	Let $(M,g)$ be a Riemannian surface and let $\tilde g = e^{2f}g$ be a metric conformal to $g$. Then the geodesic sprays of $g,\tilde g$ are related by
	$$\tilde E_1 = E_1 - \star df \, V - df \, R.$$
\end{lemma}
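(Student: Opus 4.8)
The plan is to read off the relation from the pointwise change of the Levi-Civita connection under the conformal rescaling $\tilde g = e^{2f}g$, and then repackage that relation as an identity between vector fields on $TM$. Writing the Christoffel symbols of $\tilde g$ in terms of those of $g$,
$$\tilde\Gamma^k_{ij} = \Gamma^k_{ij} + \delta^k_i\,\partial_j f + \delta^k_j\,\partial_i f - g_{ij}\,\partial^k f,$$
and substituting into the geodesic equation $\ddot\gamma^k + \tilde\Gamma^k_{ij}\dot\gamma^i\dot\gamma^j = 0$ of a $\tilde g$-geodesic $\gamma$, I would obtain the $g$-covariant form
$$\nabla_{\dot\gamma}\dot\gamma = -2\,df(\dot\gamma)\,\dot\gamma + |\dot\gamma|_g^2\,\nabla f,$$
where $\nabla$, the gradient, and the norm are all taken with respect to $g$. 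This pins down the $g$-acceleration of every $\tilde g$-geodesic.

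The second step is to transfer this to the frame $(E_1,E_2,V,R)$. Since $E_1$ and $\tilde E_1$ are both sprays, each projects under $d\pi$ to the tautological vector $v$ at the point $v\in TM$; hence their difference $\tilde E_1 - E_1$ is \emph{vertical}, and so can be written as $bV + cR$ for functions $b,c$ on $TM$, with no $E_2$-component. I would then apply the connection (Dombrowski) map $\cK$ of $g$, which sends the velocity of the curve $t\mapsto\dot\gamma(t)$ to $\nabla_{\dot\gamma}\dot\gamma$ and satisfies $\cK(V|_v) = v^\perp$, $\cK(R|_v) = v$ and $\cK(E_1|_v)=0$. Linearity of $\cK$ on $T_v TM$ turns the displayed identity into
$$b\,v^\perp + c\,v = \cK(\tilde E_1|_v) = -2\,df(v)\,v + |v|_g^2\,\nabla f.$$
Decomposing $\nabla f$ in the oriented basis $(v,v^\perp)$ as $|v|_g^2\,\nabla f = df(v)\,v + df(v^\perp)\,v^\perp$, the right-hand side collapses to $-df(v)\,v + df(v^\perp)\,v^\perp$, so that $c = -df(v)$ and $b = df(v^\perp)$.

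It then remains to recognize these coefficients as the fiberwise-linear functions $df$ and $\star df$ on $TM$. From the definition $\star df = ((df^\sharp)^\perp)^\flat$ and the skew-symmetry $g(w^\perp, v) = -g(w, v^\perp)$ of rotation by $\pi/2$, one computes $(\star df)(v) = g((\nabla f)^\perp, v) = -df(v^\perp)$, whence $b = -(\star df)(v)$ and $c = -df(v)$, giving $\tilde E_1 = E_1 - \star df\,V - df\,R$ as claimed, where $\star df$ and $df$ denote the functions $v\mapsto(\star df)(v)$ and $v\mapsto df(v)$. As a sanity check I would verify that both sides are homogeneous of the same degree: using $[R,E_1]=E_1$, $[R,V]=0$, $[R,R]=0$ and the Euler relation $Rh = h$ for the $1$-homogeneous functions $h = df,\star df$, one gets $[R,\tilde E_1]=\tilde E_1$, exactly as required of a spray.

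Since the author already advertises this as an easy consequence of the conformal Christoffel formula, there is no serious analytic obstacle here; the only steps demanding genuine care are bookkeeping ones. The main one is the sign and orientation convention hidden in the Hodge star and in $\perp$, which feeds directly into the sign of the $V$-coefficient — an error there flips $\star df$ to $-\star df$. A secondary point is to keep the normalization of the connection map $\cK$ and of the coframe consistent on the full tangent bundle rather than only on $SM$, so that the homogeneity check above goes through.
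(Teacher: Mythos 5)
Your proof is correct and takes essentially the paper's intended route: the paper's entire proof is the remark that the lemma "is proved easily using the formula relating the Christoffel symbols of conformal metrics," which is exactly your starting point, and your connection-map bookkeeping amounts to re-deriving the paper's own identification \eqref{Gammaform}--\eqref{Gammaode} of a spray's $V$- and $R$-coefficients with the $\dot\gamma^\perp$- and $\dot\gamma$-components of the acceleration (you could have cited that directly). All signs check out against the paper's conventions, including $(\star df)(v) = -df(v^\perp)$.
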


\subsection{Sprays}\label{spraysec}
A \emph{spray} on a manifold $M$ is a vector field $\Gamma : TM \to TTM$ satisfying 
\begin{enumerate}
	\item The semispray condition: $d\pi \circ \Gamma = \mathrm{id}$, where $\mathrm{id}$ is the identity on $TM$, and
	\item Homogeneity: $[R,\Gamma] = \Gamma$.
\end{enumerate}

The semispray condition means that the integral curves of $\Gamma$ are canonical lifts of curves on $M$, i.e. if $\tilde \gamma$ is an integral curve of $\Gamma$ and $\gamma : = \pi\circ\tilde\gamma$ is the projection of $\tilde \gamma$ to $M$, then $\dot\gamma = \tilde\gamma$. The homogeneity condition means that if $\tilde\gamma$ is an integral curve of $\Gamma$, then so is the dilated curve $t \mapsto \lambda \tilde\gamma(\lambda t )$ for every $\lambda > 0$.
A curve of the form $\gamma = \pi \circ \tilde\gamma$, where $\tilde\gamma$ is an integral curve of $\Gamma$, is called a \emph{$\Gamma$-geodesic}.

\begin{example}\normalfont
	The flat spray on $\RR^n$ is the spray whose geodesics are straight lines, parametrized by constant speed. In a coordinate chart $(x^i,y^i)$ on $T\RR^n$, the flat spray is given by $\Gamma = y^i\partial_{x^i}$.
\end{example}
\begin{example}\normalfont
	More generally, the geodesic spray $\Gamma_g$ of a Riemannian manifold $(M,g)$ is the vector field generating the geodesic flow on $TM$. The $\Gamma_g$-geodesics are the constant-speed geodesics of the metric $g$. If $x^i$ are local coordinates on $M$, and $(x^i,y^i)$ are the induced local coordinates on $TM$, the geodesic spray is given in this local chart by $\Gamma_g = y^i\partial_{x^i} - \left(\Gamma^k_{ij}y^iy^j\right)\partial_{y_k}$, where $\Gamma^k_{ij}$ are the Christoffel symbols of $g$ in this chart. In case of a Riemannian surface, the geodesic spray is the vector field $E_1$.
\end{example}

We recall some definitions and facts about sprays. For more details see \cite{Sh}.
For each $v \in TM$ there exists a unique $\Gamma$-geodesic $\gamma_v$ satisfying $\dot\gamma_v(0) = v$, defined on a maximal open interval $I_v \subseteq \RR$. If $v = 0$ then $\gamma_v\equiv \pi(v)$ and $I_v = \RR$. Let 
	$$\cU : = \{v \in TM \mid 1 \in I_v\}.$$
	 Define the exponential map of $\Gamma$ by 
	$$\exp^\Gamma(v) : = \gamma_v(1), \qquad v \in \cU.$$

By the homogeneity of $\Gamma$, for every $v \in \cU$ and every $t \in I_v$,

\begin{equation}\label{exphom} \exp^\Gamma(tv) = \gamma_{v}(t).\end{equation}

For every $x \in M$ we set $\cU_x : = \cU\cap T_xM$ and define
$$\exp^\Gamma_x : = \exp^\Gamma\vert_{T_xM}.$$
\begin{theorem}[Whitehead \cite{Wh}, see also {\cite[Theorem 14.1.1]{Sh}}]\label{white}
	The exponential map $\exp^\Gamma$ is $C^1$ on $\cU$, and smooth away from the zero section.	For every $x \in M$, the differential of $\exp^\Gamma_x$ at $0$ is the identity (under the identification $T_0T_xM \cong T_xM$). 
\end{theorem}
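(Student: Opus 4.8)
The plan is to separate two regimes: away from the zero section $\Gamma$ is genuinely smooth, so everything follows from standard ODE theory, while across the zero section the degree-two homogeneity makes $\Gamma$ only $C^1$, and there the entire argument must be extracted from the rescaling relation \eqref{exphom}. Write $\Phi^t$ for the (local) flow of $\Gamma$ on $TM$, so that $\exp^\Gamma(v) = \pi(\Phi^1(v))$ whenever $1 \in I_v$. Since $\Gamma$ is smooth on $TM$ away from the zero section, smooth dependence of solutions of ODEs on their initial conditions shows that $\Phi^1$, and hence $\exp^\Gamma = \pi\circ\Phi^1$, is smooth on $\cU$ away from the zero section. This yields the second assertion at once, and reduces the problem to proving $C^1$-regularity at points of the zero section and computing $d(\exp^\Gamma_x)_0$.

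For the behavior near the zero section I would first record the infinitesimal form of homogeneity. Let $\mu_\lambda : TM \to TM$, $v \mapsto \lambda v$, denote fiber scaling. Differentiating the relation $\gamma_{\lambda v}(t) = \gamma_v(\lambda t)$ that underlies \eqref{exphom} gives $\Phi^t\circ\mu_\lambda = \mu_\lambda\circ\Phi^{\lambda t}$; taking $t=1$ and $\lambda = |v|_g$, and using $\pi\circ\mu_\lambda = \pi$, one obtains
$$\exp^\Gamma(v) = \pi(\Phi^1(v)) = \pi\big(\Phi^{|v|_g}(v/|v|_g)\big).$$
Now set $g(s,u) := \pi(\Phi^s(u))$ for $(s,u)$ in a neighborhood of $\{0\}\times SM$. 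Since $u \in SM$ stays away from the zero section, $g$ is jointly smooth near $s=0$, with $g(0,u) = \pi(u)$ and, by the semispray condition $d\pi\circ\Gamma = \mathrm{id}$,
$$\partial_s g(0,u) = d\pi(\Gamma(u)) = u.$$
Fixing $x$, working in a coordinate chart centered at $x$, and restricting $u$ to the unit circle $S_xM$, Taylor's theorem gives $g(s,u) = s\,u + s^2\rho(s,u)$ with $\rho$ smooth on a compact neighborhood of $\{0\}\times S_xM$.

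Substituting $s = |v|_g$ and $u = v/|v|_g$ — which is precisely the polar-coordinate map on $T_xM$ — yields $\exp^\Gamma_x(v) = v + |v|_g^2\,\rho(|v|_g, v/|v|_g)$ for $v\neq 0$, and $\exp^\Gamma_x(0)=x$. Writing $S(v)$ for the remainder, boundedness of $\rho$ on the compact set gives $S(v) = O(|v|_g^2)$, so $\exp^\Gamma_x$ is differentiable at $0$ with $d(\exp^\Gamma_x)_0 = \mathrm{id}$. For the $C^1$ claim I differentiate $S$ for $v\neq 0$: the only dangerous contribution comes from differentiating the angular variable $v/|v|_g$, whose derivative is $O(|v|_g^{-1})$, but this is absorbed by the prefactor $|v|_g^2$, so $dS_v = O(|v|_g)\to 0 = dS_0$ as $v\to 0$. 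Hence $\exp^\Gamma_x$ is $C^1$ across the zero section with the stated differential; repeating the argument with $u$ ranging over all of $SM$ and the base point varying, the transverse derivatives inheriting smoothness from the flow, shows $\exp^\Gamma$ itself is $C^1$ on $\cU$. The main obstacle is exactly this loss of regularity at the zero section: one cannot invoke smooth — or even $C^1$ — dependence on initial conditions through the origin, and the whole argument hinges on trading the singular radial direction for the smooth unit-speed flow via the homogeneity relation, the $|v|_g^2$ factor being just strong enough to kill the $O(|v|_g^{-1})$ blow-up of the angular derivative and deliver continuity of the differential rather than mere pointwise differentiability.
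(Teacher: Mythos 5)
The paper itself offers no proof of Theorem \ref{white}: it is imported verbatim from Whitehead \cite{Wh} (see also \cite[Theorem 14.1.1]{Sh}), so there is no internal argument to compare yours against. Judged on its own merits, your proof is correct, and it is in substance the classical rescaling argument behind the cited result: smoothness off the zero section comes from smooth dependence on initial conditions, while at the zero section one trades the radial direction for the time variable of the flow on the unit sphere bundle via $\exp^\Gamma(v)=\pi\bigl(\Phi^{|v|_g}(v/|v|_g)\bigr)$, Taylor-expands in that time variable, and lets the factor $|v|_g^2$ absorb the $O(|v|_g^{-1})$ blow-up of the derivative of $v\mapsto v/|v|_g$. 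Two places deserve tightening. First, ``smooth away from the zero section'' tacitly uses that the orbit of a nonzero vector never meets the zero section in finite time; this is true (zero vectors are rest points of the flow, and $\Gamma$ is locally Lipschitz near them, being $C^1$ there by homogeneity), but it should be said. Second, your closing sentence, where the fixed base point $x$ is replaced by a varying one, is compressed to the point of being a gesture; the honest statement is that the same chart computation on $TM$, with $u$ ranging over a compact piece of $SM$ rather than over $S_xM$, gives $\| dS_v\| = O(|v|_g)$ uniformly on compacta, whence $d\exp^\Gamma$ exists and is continuous across the zero section. Finally, it is worth knowing the shorter route that homogeneity affords: in local coordinates the spray coefficients are positively $2$-homogeneous in $y$ and smooth off $y=0$, hence are $O(|y|^2)$ with first derivatives $O(|y|)$, so $\Gamma$ is a $C^1$ vector field on all of $TM$; standard ODE theory then makes the flow, and hence $\exp^\Gamma=\pi\circ\Phi^1$, $C^1$ outright, and $d(\exp^\Gamma_x)_0=\mathrm{id}$ follows from \eqref{exphom}, which gives $\exp^\Gamma_x(tv)=\gamma_v(t)$ and hence directional derivative $v$ at the origin. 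Your argument proves the same statement by hand, at the cost of more work but with no black box beyond smooth dependence away from the zero section.
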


As a consequence, $\exp^\Gamma_x$ is a $C^1$ diffeomorphism from a neighborhood of $0$ to a neighborhood of $x$. We also define the backwards exponential map by
$$\ola{\exp}^\Gamma(v) : = \gamma_v(-1)$$
on the open set $\ola{\cU} : = \{v \in TM \mid -1 \in I_v\}$, and define $\ola{\cU}_x$ and $\ola{\exp}_x^\Gamma$ similarly for all $x \in M$.

\begin{definition}[Simple spray]\normalfont
	A spray $\Gamma$ will be called \emph{simple} if for every $x \in M$ the maps $\exp^\Gamma_x$ and $\ola{\exp}^\Gamma_x$ are both $C^1$ diffeomorphisms from $\cU_x$ to $M$ and from $\ola{\cU}_x$ to $M$, respectively.
\end{definition}

\begin{example}[Simple sprays]\normalfont
	The flat spray in $\RR^n$ is simple. More generally, the geodesic spray of any Cartan-Hadamard manifold (i.e. complete, simply connected Riemannian manifold with nonpositive sectional curvature) is simple.
\end{example}

Let $(M,g)$ be a Riemannian surface and let $\Gamma$ be a spray on $M$. The semispray condition, together with \eqref{theta12def}, imply that $\theta_1(\Gamma) = 1$ and $\theta_2(\Gamma) = 0$. Thus every spray has the form
\begin{equation}\label{Gammaform}\Gamma = E_1 + {k}\,V + {h} \, R,\end{equation}
for some smooth functions ${k},{h}$ on $TM$. 

\medskip
In terms of covariant derivatives, if $\Gamma$ is given by \eqref{Gammaform}, the $\Gamma$-geodesics are exactly the solutions to the second-order ordinary differential equation
\begin{equation}\label{Gammaode} \nabla_{\dot\gamma}\dot\gamma = {h}(\dot\gamma)\,\dot\gamma + {k}(\dot\gamma)\,\dot\gamma^\perp.\end{equation}

Thus, if $\gamma$ is a $\Gamma$-geodesic, then the (signed) geodesic curvature of $\gamma$ with respect to $g$ is given by $\left<\nabla_{\dot\gamma}\dot\gamma,\dot\gamma^\perp\right>/|\dot\gamma|^3 = {k}(\dot\gamma)/|\dot\gamma|$.

\begin{definition}[Metric spray]\normalfont
	Let $(M,g)$ be a Riemannian surface and let $\Gamma$ be a spray on $M$. We say that $\Gamma$ is \emph{metric} if the geodesics of $\Gamma$ are constant-speed with respect to $g$, or equivalently if $\Gamma g = 0$. In terms of the representation \eqref{Gammaform}, the spray $\Gamma$ is metric if and only if ${h} \equiv 0$.
\end{definition}

\begin{definition}[Magnetic spray]\normalfont
	A spray $\Gamma$ on a Riemannian surface $(M,g)$ will be called \emph{magnetic} with respect $g$ if the function ${k}$ defined above is independent of the direction, i.e. $Vk = 0$; equivalently, there exists a function $\kappa:M \to \RR$ such that $k(v) = |v|\cdot {\kappa}(\pi(v))$ for all $v \in TM$. In this case we call $\kappa$ the \emph{geodesic curvature function} of $\Gamma$ (with respect to $g$). 
\end{definition}


If $\Gamma$ is metric and simple then for every $x \in M$ we may define a vector field $V_x$ on $M\setminus\{x\}$ as the pushforward of the vector field $R/|R|_g$ on $T_xM$ via the exponential map:
$$V_x : = \left(\exp_x^\Gamma\right)_*(R/|R|_g).$$
The integral curves of $R/|R|_g$ are lines through the origin in $T_xM$; by \eqref{exphom}, such lines are mapped by $\exp^\Gamma_x$ to unit-speed $\Gamma$-geodesics. It follows that the vector field $V_x$ is smooth on $M \setminus \{x\}$ and satisfies $|V_x|_g = 1$, and the integral curves of $V_x$ are unit-speed $\Gamma$-geodesics emanating from $x$. Thus by \eqref{Gammaode} we have

$$\nabla_{V_x}V_x = {k}(V_x)V_x^\perp$$
on $M \setminus \{x\}$. We also consider
\begin{equation}\label{etaxdef}\eta_x : = V_x^\flat.\end{equation}
	The one-form $\eta_x$ is defined and smooth on $M \setminus \{x\}$ and $|\eta_x|_g = |V_x|_g = 1$. Moreover
	\begin{align*}
		d\eta_x(V_x,V_x^\perp) & = V_x\eta_x(V_x^\perp) - V_x^\perp\eta_x(V_x) - \eta_x([V_x,V_x^\perp])\\
		& = - \eta_x([V_x,V_x^\perp])\\
		& = - \left<V_x,\nabla_{V_x}V_x^\perp - \nabla_{V_x^\perp}V_x\right>\\
		& = - \left<V_x,\nabla_{V_x}V_x^\perp\right>\\
		& = \left<\nabla_{V_x}V_x,V_x^\perp\right>\\
		& = {k}(V_x).
	\end{align*}
	In particular, if $\Gamma$ is magnetic with respect to $g$ and $\kappa$ is its geodesic curvature function, then
	\begin{equation}\label{detax} d\eta_x = \kappa\,\omega_g \qquad \text{ on } M\setminus\{x\}. \end{equation}

\begin{definition}[Convexity]\normalfont	
	Let $\Gamma$ be a spray on a manifold $M$ and let $A \subseteq M$. The set $A$ is said to be \emph{$\Gamma$-convex} if for every $\Gamma$-geodesic $\gamma$, if $\gamma(0) \in A$ and $\gamma(1) \in A$ then also $\gamma(t) \in A$ for every $t \in (0,1)$. The \emph{$\Gamma$-convex hull} of a subset $A \subseteq M$ is the smallest $\Gamma$-convex set containing $A$. 

	\medskip
	We shall say that a spray $\Gamma$ is \emph{proper} if the $\Gamma$-convex hull of every precompact set is precompact. This condition is not satisfied by all sprays, see Example \ref{noncchex} below.
\end{definition}

\begin{remark}\label{openchrmk}\normalfont
	If $\Gamma$ is simple, then the interior $A^\circ$ of a $\Gamma$-convex set $A$ is also $\Gamma$-convex. Indeed, the image of $A^\circ\times A^\circ\times(0,1)$ under the map $(x,y,t)\mapsto \exp^\Gamma_x(t(\exp^\Gamma_x)^{-1}(y))$ is open (since $\Gamma$ is simple), contains $A^\circ$ by Theorem \ref{white}, and is contained in $A$ since $A$ is $\Gamma$-convex, hence it equals $A^\circ$.
\end{remark}

\begin{example}[Improper spray]\normalfont\label{noncchex}
	Consider a spray on $\RR^2$ with the following property: for every $n \ge 0 $, the curve $\gamma_n(t) : =  (t,-\cos(3^n t) + 2n)$, $t \in [-3^{-n}\cdot2\pi,3^{-n}\cdot2\pi]$, is a $\Gamma$-geodesic, as well as the line $\ell_n(t) : = (-t , 2n + 1)$, $t \in \RR$, see Figure \ref{noncchfig}. Note that since $\gamma_n$ and $\ell_n$ move in opposite directions, there is no contradiction to the uniqueness of $\Gamma$-geodesics with given initial conditions. One can even take $\Gamma$ to be a simple spray. The $\Gamma$-convex hull of the curve $\gamma_0$ contains all of the curves $\gamma_n$, and is therefore not compact.
\end{example}

\begin{figure}
	\centering

	\includegraphics[width = .8\textwidth]{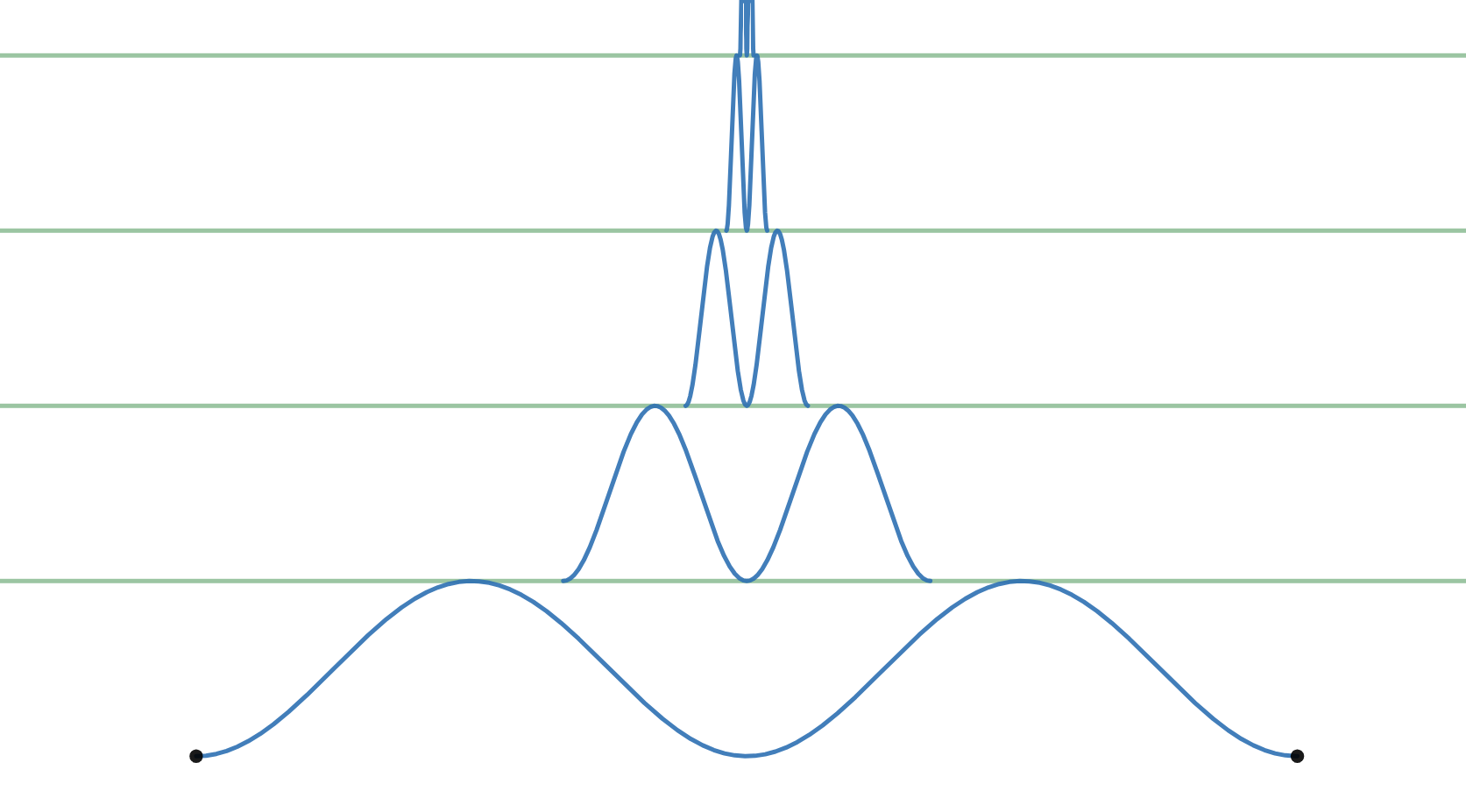}

	\caption{An improper spray}
	\label{noncchfig}
\end{figure}

\subsection{Jacobi Fields}

Let $\Gamma$ be a spray on a manifold $M$ and denote its flow on $TM$ by $\Phi_t$. Let $\gamma$ be a $\Gamma$-geodesic. A vector field $S$ along $\gamma$ is called a \emph{$\Gamma$-Jacobi field} if it satisfies one of the following equivalent conditions:
\begin{enumerate}
	\item $S(t) = d\pi(d\Phi_t(\xi))$ where $\xi \in TTM$ satisfies $\pi(\xi) = \dot\gamma(0)$ and $d\pi(\xi) = S(0)$.
	\item There exists a variation $F(s,t)$ through $\Gamma$-geodesics such that $S = dF\left(\partial_s\vert_{s=0}\right)$.
\end{enumerate}
Here, by a variation through $\Gamma$-geodesics we mean that $F$ is smooth and $F(s,\cdot)$ is a $\Gamma$-geodesic for every $s$. Let us prove the equivalence of 1 and 2. Suppose first that such $F$ exists. Recall that $\Gamma$-geodesics are curves of the form $\pi\circ\tilde\gamma$ where $\tilde\gamma$ is an integral curve of $\Gamma$. Thus by \eqref{exphom} we can write
\begin{equation}\label{Vardef}F(s,t) = \exp^\Gamma(tc(s)) = \pi(\Phi_t(c(s))),\end{equation}
where $c$ is a curve on $TM$ with $c(0) = \dot\gamma(0)$. Write $\xi = \dot c(0)$. Then $\pi(\xi) = \pi(\dot c(0)) = c(0) = \dot\gamma(0)$. By differentiating with respect to $s$ we get $S(t) = dF\left(\partial_s(t,0)\right) = d(\pi\circ\Phi_t)(\dot c(0)) = d\pi(d\Phi_t(\xi))$, and by setting $t=0$ we see that $S(0) = d\pi(\xi)$. In the other direction, if $S = d\pi(d\Phi_t(\xi))$, then we can take any curve $c$ on $TM$ with $c(0) = \dot\gamma(0)$ and $d\pi(\dot c(0)) = S(0)$ and define $F$ by \eqref{Vardef}.

\begin{definition}[Transversal Jacobi field]\normalfont
	Let $\Gamma$ be a spray on a two-dimensional oriented manifold $M$. We shall say that a $\Gamma$-Jacobi field $S$ along a $\Gamma$-geodesic $\gamma$ is \emph{transversal} if $\omega(\dot\gamma,S) > 0$, where $\omega$ is any volume form on $M$.
\end{definition}

\subsection{Projective Finsler metrizability}

A \emph{Finsler metric} on a manifold $M$ is a function $\cF : TM \to [0,\infty)$, positive and smooth away from the zero section, which satisfies the following
 requirements:

\begin{itemize}
	\item \emph{positive homogeneity}: $\cF(\lambda v ) = \lambda \cF(v)$ for all $v \in TM$ and $\lambda>0$.
	\item \emph{strong convexity}: Fix $x \in M$. Then the function $\cF^2$ is convex in the linear space $T_x M$, and moreover  its Hessian at any point $0 \neq v \in T_x M$ is positive definite.
\end{itemize}
A manifold endowed with a Finsler metric is called a \emph{Finsler manifold}. A Finsler metric induces a metric on $M$ by setting
$$d(x,y) = \inf_\gamma \mathrm{Length}(\gamma),$$
where
$$\mathrm{Length}(\gamma) : = \int_0^1\cF(\dot\gamma(t)) dt,$$
and where the infimum is taken over all $C^1$ curves joining $x$ to $y$.

\medskip
A \emph{minimizing geodesic} of a Finsler metric is a constant-speed curve $\gamma:[a,b] \to M$ satisfying $\mathrm{Length}(\gamma) = d(\gamma(a),\gamma(b))$.  A {\it geodesic} is a curve
$\gamma:[a,b] \to M$ that is locally a minimizing geodesic, i.e., for any $t_0 \in [a,b]$ there exists $\delta > 0$ such that the restriction of $\gamma$
to the interval $[a,b] \cap (t_0 - \delta, t_0 + \delta)$
is a minimizing geodesic. Equivalently, a geodesic is a solution to the Euler-Lagrange equation associated with the Lagrangian $\cF^2/2$. The \emph{geodesic spray} of $\cF$ is the spray on $TM$ whose geodesics are the constant-speed geodesics of the metric $\cF$. We say that $(M,\cF)$ is \emph{geodesically convex} if any two points  $x,y \in M$ are joined by a minimizing geodesic. We refer the reader to \cite{BCS} for more background on Finsler metrics. 

\begin{definition}[Projectively equivalent sprays]\normalfont
	Two sprays $\Gamma, \tilde \Gamma$ are said to be \emph{projectively equivalent} if there exists a scalar function $f:TM\to\RR$ such that $\Gamma - \tilde\Gamma = fR$. Equivalently, the geodesics of $\Gamma$ and $\tilde\Gamma$ coincide up to orientation-preserving reparametrization.
\end{definition}

\begin{definition}[Projectively Finsler-metrizable spray]\normalfont
	A spray $\Gamma$ on a manifold $M$ is said to be \emph{projectively Finsler-metrizable (pFm)} if there exists a geodesically convex Finsler metric $\cF$ on $M$ such that $\Gamma$ is projectively equivalent to the geodesic spray of $\cF$, that is, if the geodesics of the Finsler metric $\cF$ coincide with the geodesics of $\Gamma$ up to orientation-preserving reparametrization. We remark that the usual definition of projective Finsler metrizability does not include geodesic convexity of $\cF$. See \cite{BM} and references therein for more information on projective Finsler metrizability. See Darboux \cite{Dar} and Matsumoto \cite{Mat} for a local resolution of the projective metrizabilty problem in two dimensions (here `local' refers to locality also in the tangent space).
\end{definition}

\begin{proposition}\label{metrizabilityppn}
	Let $M$ be a two-dimensional manifold and let $\Gamma$ be a simple proper spray on $M$. Suppose that there exists a Riemannian metric $g$ on $M$ such that $\Gamma$ is magnetic with respect to $g$. Then for every compact set $A \subseteq M$ there exists an open $\Gamma$-convex set $U \supseteq A$ such that the restriction of $\Gamma$ to $U$ is projectively Finsler-metrizable.
\end{proposition}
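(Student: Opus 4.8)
The plan is to exhibit, on a suitable $\Gamma$-convex neighbourhood $U \supseteq A$, a Randers metric $\cF = |v|_g + \beta$ whose geodesics are exactly the $\Gamma$-geodesics. Since projective Finsler-metrizability depends only on the projective class of $\Gamma$, I would first replace $\Gamma$ by its metric representative $E_1 + {k}\,V$ in the same class; this is readily checked to be simple and proper as well, and it is still magnetic, so from now on I assume $\Gamma$ is metric and ${k} = |v|\kappa$. The relevant fact about Randers metrics is that, by the same first-variation computation that yields \eqref{detax}, a curve is an $\cF$-geodesic up to reparametrization if and only if its signed $g$-geodesic curvature equals the function $\rho$ determined by $d\beta = \rho\,\omega_g$. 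Thus the only two things to arrange are that $d\beta = \kappa\,\omega_g$, so that the $\cF$-geodesics are precisely the $\Gamma$-geodesics by \eqref{Gammaode}, and that $|\beta|_g < 1$ everywhere on $U$, which is exactly the condition for $\cF$ to be a genuine, strongly convex Finsler metric.

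Constructing $U$ is routine. Starting from a precompact open neighbourhood of $A$, properness makes its $\Gamma$-convex hull precompact, and by Remark \ref{openchrmk} the interior $U$ of this hull is open, $\Gamma$-convex, precompact, and still contains $A$. For any $p \in U$ the map $\exp^\Gamma_p$ is a homeomorphism of $\cU_p \subseteq T_pM$ onto $M$ by simplicity, and $\Gamma$-convexity of $U$ says that $(\exp^\Gamma_p)^{-1}(U)$ is star-shaped about the origin; hence $U$ is contractible and $H^2_{\mathrm{dR}}(U) = 0$, so the closed $2$-form $\kappa\,\omega_g$ admits a smooth primitive on $U$. Note also that $M$ is homeomorphic to an open subset of the plane, so there is ample room outside $\overline{U}$.

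The heart of the matter, and the step I expect to be the main obstacle, is producing a primitive with $|\beta|_g < 1$: an arbitrary primitive need not satisfy this, since the integral $\int_S \kappa\,\omega_g$ over a subregion $S \subseteq U$ bounds $\sup_U|\beta|_g$ from below, so for large $\kappa$ or large $U$ no bound $<1$ is obtainable by adding an exact form in an ad hoc way. Here simplicity enters decisively. Recall that for each $x$ the unit one-form $\eta_x = V_x^\flat$ is smooth on $M \setminus \{x\}$, satisfies $|\eta_x|_g \equiv 1$, and by \eqref{detax} is a primitive of $\kappa\,\omega_g$. Choosing a compact set $\Sigma \subseteq M \setminus \overline{U}$ that surrounds $U$ and a probability measure $\mu$ on $\Sigma$, I would set
$$\beta := \int_\Sigma \eta_x \, d\mu(x),$$
with the sign of $\beta$ fixed by the orientation convention for $\perp$. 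Then $d\beta = \int_\Sigma \kappa\,\omega_g\,d\mu = \kappa\,\omega_g$, while $|\beta|_g \le \int_\Sigma |\eta_x|_g\,d\mu = 1$ with strict inequality at every $y \in \overline{U}$, because the arrival directions $V_x(y)$ vary with $x$ (the backward exponential $\ola{\exp}^\Gamma_y$ being a diffeomorphism) and so are not all parallel. Compactness of $\overline{U}$ together with joint continuity of $(x,y)\mapsto \eta_x(y)$ upgrades this to a uniform bound $\sup_{\overline U}|\beta|_g < 1$. This averaging of the unit primitives $\eta_x$ is precisely where simplicity is used in an essential way.

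With such a $\beta$ in hand, $\cF = |v|_g + \beta$ is a smooth, strongly convex Finsler metric on $U$ whose geodesics coincide, as oriented unparametrized curves, with the $\Gamma$-geodesics, so $\Gamma|_U$ is projectively equivalent to the geodesic spray of $\cF$. It remains to verify geodesic convexity of $(U,\cF)$: given $p,q \in U$, simplicity provides a unique $\Gamma$-geodesic from $p$ to $q$, which lies in $U$ because $U$ is $\Gamma$-convex and which is an $\cF$-geodesic; the fact that $\exp^\Gamma_p$ is a diffeomorphism (absence of conjugate points) should promote this geodesic to a minimizing one by a standard Finsler comparison argument, yielding geodesic convexity and completing the proof. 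I expect this final verification to be the only other point requiring care, the genuinely hard step being the construction of the strongly convex primitive above.
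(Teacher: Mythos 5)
Your construction is, up to the last step, essentially the paper's own argument: the paper likewise passes to the metric representative of the projective class, takes a precompact open $\Gamma$-convex $U \supseteq A$ via properness and Remark \ref{openchrmk}, and produces a primitive of $\kappa\,\omega_g$ of $g$-norm $<1$ by averaging the unit-norm primitives $\eta_x$ of \eqref{etaxdef}, using \eqref{detax} — there the average is $\frac13(\eta_x+\eta_y+\eta_z)$ over three points of $M\setminus U$ not lying on a common $\Gamma$-geodesic, rather than your integral $\int_\Sigma \eta_x\,d\mu(x)$ over a surrounding compact set. (Your strictness claim needs the same precision as the paper's: $|\beta(y)|_g<1$ holds only if $\supp\mu$ is not contained in a single backward $\Gamma$-geodesic through $y$; the ``surrounding'' property can be made to give this, but it is exactly the role of the paper's non-collinearity hypothesis and should be argued, since for a simple spray the set of $x$ with a fixed arrival direction $V_x(y)$ is a whole geodesic ray.)

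The genuine gap is the final step, geodesic convexity of $(U,\cF)$. You assert that absence of conjugate points (simplicity of $\Gamma$) ``should promote'' the connecting $\Gamma$-geodesic to an $\cF$-minimizer ``by a standard Finsler comparison argument.'' No such standard argument is available here: no conjugate points yields only \emph{local} minimality, and the usual upgrade to global minimality — existence of some minimizer via Hopf--Rinow, then uniqueness of connecting geodesics — requires forward completeness, which fails in your setting. The set $U$ is a precompact open piece, $\cF$ is only a Finsler metric where $|\beta|_g<1$, and no completeness of $(M,g)$ is assumed anywhere in the proposition; nothing guarantees that a length-minimizing curve between $p,q\in U$ exists at all, let alone that it stays in $U$. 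The paper closes precisely this gap with a calibration argument, which is the idea missing from your proposal: extend the $\Gamma$-geodesic $\gamma_0$ joining $p$ to $q$ backwards to a point $x=\bar\gamma(-t)\notin U$ chosen so that any competitor $\gamma\subseteq U$ is homotopic to $\gamma_0$ in $M\setminus\{x\}$. Since $\eta-\eta_x$ is closed on $M\setminus\{x\}$ by \eqref{detax}, one has $\int_{\gamma_0}(\eta-\eta_x)=\int_{\gamma}(\eta-\eta_x)$, so it suffices to compare $\int(\sqrt{g}-\eta_x)$ along the two curves; this integral vanishes on $\gamma_0$, whose tangent is proportional to $V_x$ so that $\eta_x(\dot\gamma_0)=|\dot\gamma_0|$, and is nonnegative on every curve because $|\eta_x|_g=1$. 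Equality forces $\dot\gamma$ proportional to $V_x$, which also gives uniqueness of the minimizer, hence both geodesic convexity and the projective equivalence in one stroke. Some such global argument (rather than a conjugate-point comparison) is needed to complete your proof.
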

\begin{proof}
	Let $g$ be a Riemannian metric such that $\Gamma$ is magnetic with respect to $g$, and let $\kappa : M \to \RR$ be the corresponding geodesic curvature function. By replacing $\Gamma$ with a projectively equivalent spray we may assume that $\Gamma$ is metric with repsect to $g$. Since $\Gamma$ is proper, there exists a precompact open set $U$ containing $A$ which is $\Gamma$-convex (see Remark \ref{openchrmk}).

	\begin{lemma}\label{etalemma}
		There exists a 1-form $\eta$ on $U$ such that $|\eta|_g < 1$ and $d\eta = \kappa\omega_g$ on $U$.
	\end{lemma}
	Let us first finish the proof assuming Lemma \ref{etalemma}. Let $\eta$ be the 1-form from Lemma \ref{etalemma} and define a Finsler metric $\cF$ on $U$ by
	$$\cF : = \sqrt{g} - \eta.$$
	Since $|\eta|_g < 1$, this is indeed a Finsler metric, of \emph{Randers} type (see e.g. \cite{BCS}). Since $\Gamma$ is simple and $U$ is $\Gamma$ convex, every pair of points in $U$ is joined by a $\Gamma$-geodesic. We shall prove that this $\Gamma$-geodesic is uniquely 	length-minimizing with respect to $\cF$. It will then follow that $\Gamma$-geodesics coincide with the geodesics of $\cF$ up to orientation-preserving reparametrization, and that $\cF$ is geodesically convex. Let $p,q \in U$, let $\gamma$ be a $C^1$ curve in $U$ joining $p$ to $q$ and let $\gamma_0 : [0,1] \to U$ be the $\Gamma$-geodesic joining $p$ to $q$ (which lies inside $U$ since $U$ is $\Gamma$-convex). 

	\medskip
	Since $\Gamma$ is simple, the $\Gamma$-geodesic $\gamma_0$ can be extended to a geodesic $\bar\gamma:I \to M$, for some interval $I$ containing $[0,1]$, so that $\bar\gamma\vert_{[0,1]} = \gamma_0$, and there exists some $t<0$ such that  $\gamma$ and $\gamma_0$ are homotopic in $M \setminus\{\bar\gamma(-t)\}$. Set $x = \bar\gamma(-t)$, and let $\eta_x$ be defined by \eqref{etaxdef}. See Figure \ref{pqx}. By \eqref{detax} we have $d\eta_x = \kappa\omega_g = d\eta$ on $M\setminus\{x\}$, and therefore $\int_{\gamma_0}(\eta - \eta_x) = \int_{\gamma}(\eta - \eta_x)$. Hence, in order to prove that $\gamma_0$ is shorter than $\gamma$ with respect to $\cF$, it  suffices to prove that 
	\begin{equation}\label{gammagamma0lengths}\int_{\gamma_0}(\sqrt{g} - \eta_x) \le \int_{\gamma}(\sqrt{g} - \eta_x).\end{equation}
	Since $\gamma_0$ is part of the $\Gamma$-geodesic joining $x$ to $q$, its tangent is proportional to $V_x$ and therefore $\eta_x(\dot\gamma_0) = |\dot\gamma_0|$. Thus the left hand side vanishes, while the right hand side is nonnegative since $|\eta_x|_g = 1$. Moreover, equality implies that $\dot\gamma$ is proportional to $V_x$, whence $\gamma$ coincides with $\gamma_0$ up to orientation-preserving reparametrization.
\end{proof} 	

\begin{figure}
	\centering

	\includegraphics[width = .8\textwidth]{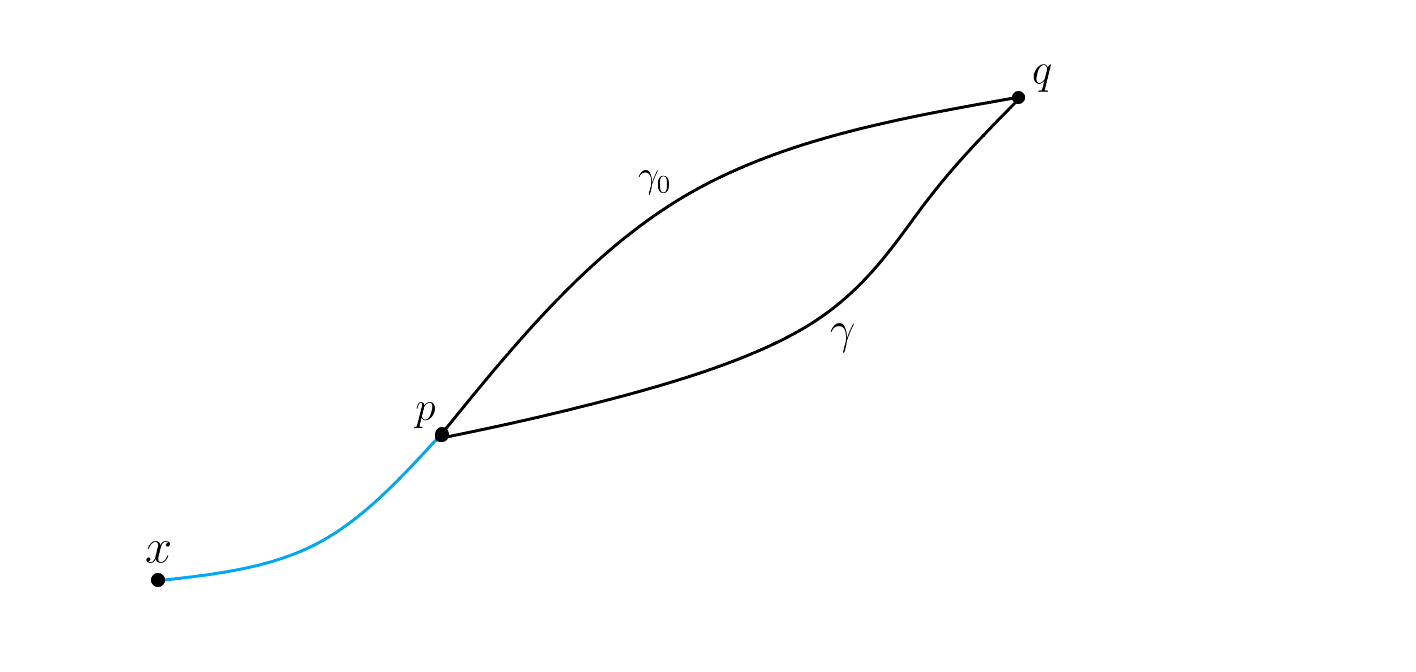}

	\caption{Proof of Proposition \ref{metrizabilityppn}}
	\label{pqx}
\end{figure}

\begin{proof}[Proof of Lemma \ref{etalemma}]
	Choose $x,y,z \in M\setminus U$ which do not lie on a common $\Gamma$-geodesic. Define $\eta = \frac13(\eta_x + \eta_y + \eta_z)$, with $\eta_x,\eta_y,\eta_z$ defined as in \eqref{etaxdef}. Since $x,y,z \notin U$, the 1-form $\eta$ is smooth on $U$, and by \eqref{detax} we have $d\eta = \kappa\omega_g$. Finally, since $x,y,z$ do not lie on the same $\Gamma$-geodesic, the one-forms $\eta_x,\eta_y,\eta_z$ do not all coincide at any point, and therefore $|\eta| < 1$.
\end{proof}

\begin{remark}
	In the last proof we constructed a solution to the linear equation $d\eta = \kappa\omega_g$, under the constraint $|\eta| \le 1$. If we set $X := (\eta^\sharp)^\perp$, then this is equivalent to the equation $\mathrm{div}X = \kappa$ subject to the constraint $|X| \le 1 $. There are several ways to solve this equation, such as stipulating $X = \nabla f$ and solving the Poisson equation $\Delta f = \kappa$. However, in order to satisfy the requirement $|X| \le 1$, rather than using linear methods, we instead solved the nonlinear equation $\nabla_VV = \kappa V^\perp$ under the constraint $|V| = 1$, where we set $V := - X^\perp = \eta^\sharp$. Note that indeed $\mathrm{div} X = \mathrm{div} V^\perp = \kappa$.
\end{remark}

\section{Weighted spray spaces}\label{weightedsec}

A triple $(M,\Gamma,\omega)$ where $M$ is a manifold, $\Gamma$ is a spray and $\omega$ is a volume form will be called a \emph{weighted spray space}. We shall only deal with the two-dimensional case. We denote the Lie derivative with respect to a vector field $X$ by $\cL_X$, and write $\cL_X^2 := \cL_X\cL_X$. The interior product is denoted by $\iota$.

\begin{proposition}\label{nncprop}
	 Let $(M,\Gamma,\omega)$ be a weighted spray space, $\dim M = 2$. Then the following are equivalent:
	 \begin{enumerate}
	 		\item For every $\Gamma$-geodesic $\gamma$ and every transversal $\Gamma$-Jacobi field $S$ along $\gamma$, the function $J(t) : = \omega(\dot\gamma(t),S(t))$ is concave.
	 		\item There exists a nonnegative smooth function $Q$ on $TM$ such that
	 		\begin{equation}
	 			\cL^2_\Gamma(\iota_\Gamma\pi^*\omega) = - Q \, \cdot \, \iota_\Gamma\pi^*\omega.
	 		\end{equation}
	 \end{enumerate}
\end{proposition}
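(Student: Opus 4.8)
The plan is to reduce the whole statement to a single identity that expresses $J$ as a pullback of the $1$-form $\alpha := \iota_\Gamma\pi^*\omega$ along the flow $\Phi_t$ of $\Gamma$, and then to read off both implications from that identity together with elementary linear algebra of covectors. First I would establish the identity. Fix a $\Gamma$-geodesic $\gamma$ with $v := \dot\gamma(0)$ and a $\Gamma$-Jacobi field $S(t) = d\pi(d\Phi_t(\xi))$, where $\pi(\xi) = v$. For any $w \in TM$ and $\zeta \in T_wTM$ the definition of pullback and the semispray condition $d\pi\circ\Gamma = \mathrm{id}$ give
$$\alpha_w(\zeta) = (\pi^*\omega)(\Gamma_w,\zeta) = \omega\big(d\pi(\Gamma_w),\,d\pi(\zeta)\big) = \omega\big(w,\,d\pi(\zeta)\big).$$
Evaluating at $w = \dot\gamma(t)$ and $\zeta = d\Phi_t(\xi)$, and using $d\pi(d\Phi_t\xi) = S(t)$, yields $\alpha_{\dot\gamma(t)}(d\Phi_t\xi) = \omega(\dot\gamma(t),S(t)) = J(t)$, i.e. $J(t) = (\Phi_t^*\alpha)_v(\xi)$. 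Differentiating twice and using $\tfrac{d}{dt}\Phi_t^* = \Phi_t^*\circ\cL_\Gamma$, I obtain
$$J''(t) = (\Phi_t^*\cL_\Gamma^2\alpha)_v(\xi) = (\cL_\Gamma^2\alpha)_{\dot\gamma(t)}(d\Phi_t\xi).$$
Note also that $\alpha_w \neq 0$ for all $w$, since $\Gamma$ is transverse to the fibers and hence not annihilated by $\pi^*\omega$.

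The implication $2 \Rightarrow 1$ is then immediate. If $\cL_\Gamma^2\alpha = -Q\,\alpha$ with $Q \ge 0$, the two displayed identities combine to $J''(t) = -Q(\dot\gamma(t))\,J(t)$; for a transversal field $J = \omega(\dot\gamma,S) > 0$, so $J'' \le 0$ and $J$ is concave.

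For $1 \Rightarrow 2$ I would argue pointwise and then invoke linear algebra. Fix $w \in TM$ and $\zeta \in T_wTM$ with $\alpha_w(\zeta) > 0$. Take $\gamma = \gamma_w$ and the Jacobi field with $\xi = \zeta$, so that $d\Phi_0\xi = \zeta$ and $J(0) = \alpha_w(\zeta) > 0$. By continuity $J > 0$ on some interval about $0$, on which the field is transversal; by hypothesis $J$ is concave there, so $J''(0) \le 0$, i.e. $(\cL_\Gamma^2\alpha)_w(\zeta) \le 0$. Replacing $\zeta$ by $-\zeta$ shows $\alpha_w(\zeta) < 0 \Rightarrow (\cL_\Gamma^2\alpha)_w(\zeta) \ge 0$, and a limiting argument along $\zeta + \epsilon\zeta_0$ with $\alpha_w(\zeta_0) > 0$ forces $(\cL_\Gamma^2\alpha)_w(\zeta) = 0$ whenever $\alpha_w(\zeta) = 0$. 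Hence $\ker\alpha_w \subseteq \ker(\cL_\Gamma^2\alpha)_w$; since $\alpha_w \neq 0$ is a covector, $(\cL_\Gamma^2\alpha)_w = -Q(w)\,\alpha_w$ for a unique scalar $Q(w)$, and the sign condition forces $Q(w) \ge 0$. Smoothness of $Q$ follows from smoothness of $\alpha$ and $\cL_\Gamma^2\alpha$ together with $\alpha \neq 0$.

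The main obstacle is this last direction, and specifically the passage from global concavity of $J$ for genuinely transversal fields to a pointwise, all-directions statement at each $w$. Two ingredients make this work: at $t = 0$ the initial value $\xi = d\Phi_0\xi$ of a lifted Jacobi field is an arbitrary element of $T_wTM$, so $\zeta$ ranges freely; and transversality is an open condition, which lets one restrict to a subinterval on which $J > 0$ and apply the hypothesis there. It is worth emphasizing that proportionality of $\cL_\Gamma^2\alpha$ to $\alpha$ is \emph{not} automatic for a general spray and volume form — a direct computation of $\cL_\Gamma^2\alpha$ in the frame $(\theta_1,\theta_2,\psi,\rho)$ generically produces nonzero $\theta_1,\psi,\rho$ components. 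It is precisely condition $1$ that forces these to vanish, by ruling out any $\zeta \in \ker\alpha_w$ on which $\cL_\Gamma^2\alpha$ fails to vanish.
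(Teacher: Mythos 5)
Your proposal is correct and takes essentially the same route as the paper: both rest on the identity $J(t) = \Phi_t^*\left(\iota_\Gamma\pi^*\omega\right)(\xi)$ followed by pointwise linear algebra showing that a covector which is nonpositive on the half-space $\{\iota_\Gamma\pi^*\omega > 0\}$ must be a nonpositive multiple of $\iota_\Gamma\pi^*\omega$; your kernel-inclusion argument and your restriction-to-a-subinterval step merely spell out what the paper compresses into the sentence ``since both expressions are linear in $\xi$'' and its time-translation remark. The only blemish is the claim that $\alpha_w \neq 0$ for every $w \in TM$ --- this fails on the zero section, where $\Gamma$ (and hence $\alpha = \iota_\Gamma\pi^*\omega$) vanishes --- but the paper's own proof glosses over the same point, and away from the zero section your argument is complete.
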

\begin{definition}[Nonnegatively curved weighted spray space]\normalfont
	If any of the equivalent conditions in Proposition \ref{nncprop} holds we shall say that the weighted spray space $(M,\Gamma,\omega)$ is \emph{nonnegatively curved}.
\end{definition}
\begin{proof}
	It suffices to prove that condition 2 is equivalent to  
	\begin{itemize}\it{
		\item[1'.] for every $\Gamma$-geodesic $\gamma$ and every transversal $\Gamma$-Jacobi field $S$ along $\gamma$ we have $J''(0) \le 0$.}	
	\end{itemize} 
	Indeed, the function $J$ is always smooth for a transversal Jacobi field, so this condition is clearly weaker than condition 1, and on the other hand, if condition 1 does not hold for some $\Gamma$-Jacobi field, then by time translation we can find a $\Gamma$-Jacobi field for which $J''(0) > 0 $.

	\medskip
	Let $\gamma$ be a $\Gamma$-geodesic and let $S$ be a Jacobi field along $\gamma$. Let $\xi \in TTM$ satisfy $\pi(\xi) = \dot\gamma(0)$ and $d\pi(\xi) = S(0)$, so that $S(t) = d\pi(d\Phi_t(\xi)).$ Note also that by the semispray condition $\dot\gamma(t) = \pi\left(\Gamma\vert_{\dot\gamma(t)}\right) = d\pi\left(\Gamma\vert_{\dot\gamma(t)}\right)$.
	Thus
	\begin{align*}
		J(t) & = \omega(\dot\gamma(t),S(t)) \\
		& = \omega(d\pi(\Gamma\vert_{\dot\gamma(t)}), d\pi(d\Phi_t(\xi))) \\
		& = \pi^*\omega(\Gamma\vert_{\dot\gamma(t)},d\Phi_t(\xi))\\
		& = \iota_\Gamma\pi^*\omega(d\Phi_t(\xi))\\
		& = \Phi_t^*\left(\iota_\Gamma\pi^*\omega\right)(\xi).
	\end{align*}
	Since $\Phi_t$ is the flow of $\Gamma$, it follows that
	$$J(0) = \iota_\Gamma\pi^*\omega(\xi) \qquad \text{ and } \qquad J''(0) = \cL_\Gamma^2(\iota_\Gamma\pi^*\omega)(\xi).$$
	Note that if $S$ is transversal then $\iota_\Gamma\pi^*\omega(\xi) > 0$. Indeed, by setting $t=0$ in the above calculation we see that $\omega(\dot\gamma(0),S(0)) = \iota_\Gamma\pi^*\omega(\xi)$. Thus condition 1' is equivalent to the assertion that for every $\xi \in TTM$ such that $\iota_\Gamma\pi^*\omega(\xi) > 0 $, it also holds that $\cL_\Gamma^2(\iota_\Gamma\pi^*\omega)(\xi) \le 0$. Since both expressions are linear in $\xi$, this is equivalent to condition 2.
\end{proof}
\begin{lemma}\label{Kgammalemma}
	Let $(M,g)$ be a Riemannian surface, let $\Gamma = E_1 + {k} V + {h} R$ be a spray on $TM$, and let $\omega = e^{-\vphi}\omega_g$ be a volume form on $M$. Then 
	\begin{equation}\label{lieeq}\cL^2_\Gamma\left(\iota_\Gamma\pi^*\omega\right) = - Q \cdot \iota_\Gamma\pi^*\omega + \alpha\end{equation}
	where
	$$\alpha := (V{k} - 2d \vphi - {h})e^{-\vphi}(\psi - {k}\theta_1)$$
	and
	\begin{equation}\label{Qdef}Q := K + {k}^2 - {h}^2 +\Gamma(d\vphi) - (d\vphi)^2 + \Gamma{h} - E_2{k} - 2{h} d\vphi .\end{equation}
\end{lemma}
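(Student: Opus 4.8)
The plan is to compute $\cL_\Gamma^2(\iota_\Gamma\pi^*\omega)$ directly in the global coframe $(\theta_1,\theta_2,\psi,\rho)$, using Cartan's formula $\cL_\Gamma = d\iota_\Gamma + \iota_\Gamma d$ together with the structure equations \eqref{structureTM}. First I would evaluate $\iota_\Gamma\pi^*\omega$. Writing $\vphi$ for $\vphi\circ\pi$, we have $\pi^*\omega = e^{-\vphi}\theta_1\wedge\theta_2$, and the semispray condition gives $\theta_1(\Gamma)=1$, $\theta_2(\Gamma)=0$; contracting yields $\iota_\Gamma\pi^*\omega = e^{-\vphi}\theta_2$, which I denote by $\beta$. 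I would fix the convention that the scalar ``$d\vphi$'' appearing in $\alpha$ and $Q$ means $\Gamma\vphi = E_1\vphi = d\vphi(\Gamma)$, these being equal since $V\vphi = R\vphi = 0$. I would also record the homogeneity relations $Rk = k$ and $Rh = h$, obtained by expanding $[R,\Gamma]=\Gamma$ with the commutators of \eqref{structureTM}; these are exactly what force the $\rho$-components to cancel at the end.

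Next I would tabulate the Lie derivatives of the coframe along $\Gamma$. Using $\psi(\Gamma)=k$, $\rho(\Gamma)=h$ and \eqref{structureTM}, Cartan's formula gives
$$\cL_\Gamma\theta_1 = \rho - h\theta_1 + k\theta_2, \qquad \cL_\Gamma\theta_2 = \psi - k\theta_1 - h\theta_2, \qquad \cL_\Gamma\psi = dk - K\theta_2.$$
Applying the Leibniz rule for $\cL_\Gamma$ and $\Gamma e^{-\vphi} = -e^{-\vphi}(d\vphi)$ then produces the first derivative
$$\cL_\Gamma\beta = e^{-\vphi}\bigl(\psi - k\theta_1 - (h + d\vphi)\theta_2\bigr).$$

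For the second derivative I would apply $\cL_\Gamma$ again to $e^{-\vphi}\sigma$ with $\sigma := \psi - k\theta_1 - (h+d\vphi)\theta_2$, via $\cL_\Gamma(e^{-\vphi}\sigma) = e^{-\vphi}\bigl(-(d\vphi)\sigma + \cL_\Gamma\sigma\bigr)$. Expanding $\cL_\Gamma\sigma$ with the table above and writing $dk = (E_1 k)\theta_1 + (E_2 k)\theta_2 + (Vk)\psi + k\rho$ (the $\rho$-coefficient being $k$ by homogeneity), I would collect coefficients of each basis form in $-(d\vphi)\sigma + \cL_\Gamma\sigma$. The $\rho$-terms cancel; the $\theta_1$- and $\psi$-coefficients turn out to be $-k(Vk - 2d\vphi - h)$ and $(Vk - 2d\vphi - h)$, so they assemble into $(Vk - 2d\vphi - h)(\psi - k\theta_1)$, which after multiplication by $e^{-\vphi}$ is precisely $\alpha$. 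The remaining $\theta_2$-coefficient, after substituting $\Gamma k = E_1 k + kVk + hk$ and expanding $(h+d\vphi)^2$, collapses to $-Q$ with $Q$ as stated; multiplying by $e^{-\vphi}\theta_2 = \beta$ gives the term $-Q\beta$, completing the identity.

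The computation is entirely mechanical, so the only genuine obstacle is bookkeeping: applying Cartan's formula correctly to each coframe element, tracking all four components simultaneously, and invoking $Rk=k$, $Rh=h$ at the right places to eliminate the $\rho$-terms. The one structural point worth isolating is that the non-$\theta_2$ part factors through the single combination $\psi - k\theta_1$ (equivalently $\cL_\Gamma\theta_2 + h\theta_2$); verifying that both the $\theta_1$- and $\psi$-coefficients carry the common scalar factor $Vk - 2d\vphi - h$ is the main internal consistency check and is what yields the clean expression for $\alpha$.
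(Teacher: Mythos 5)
Your proposal is correct and follows essentially the same route as the paper: both compute $\iota_\Gamma\pi^*\omega = e^{-\vphi}\theta_2$, derive the Lie derivatives of the coframe $(\theta_1,\theta_2,\psi,\rho)$ from Cartan's formula and the structure equations, use the homogeneity relation $Rk=k$ to kill the $\rho$-terms, and collect coefficients to produce $\alpha$ and $-Q$. The only differences are cosmetic bookkeeping choices: you obtain $Rk=k$, $Rh=h$ by expanding $[R,\Gamma]=\Gamma$ in the frame rather than via $Rk = R\psi(\Gamma)$ as the paper does, and you iterate the first-order Leibniz rule on $e^{-\vphi}\sigma$ instead of computing $\cL_\Gamma^2\theta_2$ and applying the second-order product rule.
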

\begin{remark}\normalfont
	In \eqref{lieeq}, and in similar formulae below, the one-form $d\vphi$ is regarded as a function on $TM$, and the term $\Gamma(d\vphi)$ denotes its derivative with respect to the vector field $\Gamma$.
\end{remark}
\begin{proof}
	By \eqref{omegagpullback} we have 
	$$\pi^*\omega = e^{-\vphi}\pi^*\omega_g = e^{-\vphi}\theta_1\wedge\theta_2.$$
	 Since $\theta_1(\Gamma) = 1$ and $\theta_2(\Gamma) = 0$, it follows that
	$$\iota_\Gamma\pi^*\omega = e^{-\vphi}\theta_2.$$
	\medskip
	From \eqref{structureTM} and Cartan's formula for a Lie derivative $\cL_X = \iota_Xd + d\iota_X$ we get
	\begin{align}\begin{split}\label{lieder1}
		\cL_\Gamma\theta_1 & = \rho - {h}\theta_1 + {k}\theta_2, \qquad \cL_\Gamma\theta_2 = -{h}\theta_2 + \psi  -{k}\theta_1, \qquad \cL_\Gamma\rho = d{h}, 	\end{split}\\&\text{and}\nonumber\\
	 	\cL_\Gamma\psi & = d{k} - K \theta_2 \nonumber\\
	 	 & = (E_1{k})\theta_1 + (E_2{k} - K)\theta_2 + (V{k})\psi + (R{k})\rho.\nonumber
	\end{align}
	Observe that 
	\begin{equation}\label{Rkappa} R{k} = R\psi(\Gamma) = d\psi(R,\Gamma) + \Gamma\psi(R)+ \psi([R,\Gamma]) =  \psi(\Gamma) =  {k},\end{equation}
	where in the third passage we have used \eqref{structureTM} and the fact that $(\theta_1,\theta_2,\psi,\rho)$ is dual to $(E_1,E_2,V,R)$ to show that the first two terms vanish, and then used the homogeneity $[R,\Gamma] = \Gamma$. Thus
	\begin{equation}\label{lieder2}
		\cL_\Gamma\psi = (E_1{k})\theta_1 + (E_2{k} - K)\theta_2 + (V{k})\psi + {k}\rho.
	\end{equation}
	By \eqref{lieder1}, \eqref{lieder2}, \eqref{Gammaform} and \eqref{Rkappa} we have
	\begin{align*}
		\cL_\Gamma^2\theta_2 = &  - (\Gamma{h})\theta_2 - {h}\cL_\Gamma\theta_2 + \cL_\Gamma\psi - (\Gamma{k})\theta_1 - {k}\cL_\Gamma\theta_1\\
		 = & - (\Gamma{h})\theta_2 + {h}^2\theta_2 - {h}\psi + {h}{k}\theta_1 + (E_1{k})\theta_1 + (E_2{k} - K)\theta_2 + (V{k})\psi +{k}\rho \\ & - (E_1{k} + {k} (V {k}) + {h}(R{k}))\theta_1 -{k}\rho + {k}{h}\theta_1- {k}^2\theta_2\\
		= & (-\Gamma{h} + {h}^2 +  E_2{k} - K - {k}^2)\theta_2 + (V{k} - {h})(\psi - {k} \theta_1).
	\end{align*}
	If we view $\vphi$ as a function on $TM$ which is constant on each fiber, then we have $\Gamma \vphi = d\vphi$ on $TM$. Thus
	\begin{align*}
		\cL^2_\Gamma(\iota_\Gamma\pi^*\omega) & = \cL^2_\Gamma(e^{-\vphi}\theta_2)\\
		= &  (\Gamma^2 e^{-\vphi})\theta_2 + 2(\Gamma e^{-\vphi})\cL_\Gamma\theta_2 + e^{-\vphi}\cL^2_\Gamma\theta_2\\
		= & \left(\Gamma^2 e^{-\vphi} + e^{-\vphi}(-\Gamma{h} + {h}^2 + E_2{k} - K - {k}^2) - 2{h}\Gamma e^{-\vphi}\right)\theta_2 \\
		& + (2\Gamma e^{-\vphi} + e^{-\vphi}(V{k} - {h}))(\psi - {k}\theta_1)\\
		= & \left(-\Gamma(d\vphi) + (d\vphi)^2 - \Gamma{h} + {h}^2 + E_2{k} - K - {k}^2 + 2{h} d\vphi\right) \cdot e^{-\vphi}\theta_2 \\
		& + \left(-2d\vphi + V{k} - {h}\right)e^{-\vphi}(\psi - {k}\theta_1)\\
		& = - Q \cdot \iota_\Gamma\pi^*\omega + \alpha 
	\end{align*}
	as claimed.
\end{proof}
\begin{corollary}\label{nncriemcor}
	Let $(M,g)$ be a Riemannian surface, let $\Gamma = E_1 + {k} V$ be a metric spray on $(M,g)$ and let $\omega = e^{-\vphi}\omega_g$ be a volume form on $M$. Then the following are equivalent:
	\begin{enumerate}
		\item The weighted spray space $(M,\Gamma,\omega)$ is nonnegatively curved.
		\item The spray $\Gamma$ is magnetic with respect to the metric $e^{-4\vphi}g$, and 
		\begin{equation}\label{nncRiemeq}K + {k}^2  +\Gamma(d\vphi) - (d\vphi)^2 - E_2{k} \ge 0.\end{equation}
	\end{enumerate}
	In particular, the weighted spray space $(M,\Gamma,\omega_g)$ is nonnegatively curved if and only if $\Gamma$ is magnetic with respect to $g$ and 
	$$K + \kappa ^ 2 - |\nabla\kappa| \ge 0,$$
	where $\kappa$ is the geodesic curvature function of $\Gamma$.
\end{corollary}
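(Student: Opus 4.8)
The plan is to read everything off the master formula of Lemma \ref{Kgammalemma}, specialized to the metric case $h\equiv 0$. Setting $h=0$ there gives
\[
\cL^2_\Gamma(\iota_\Gamma\pi^*\omega) = -Q\cdot\iota_\Gamma\pi^*\omega + \alpha, \qquad \iota_\Gamma\pi^*\omega = e^{-\vphi}\theta_2,
\]
with $Q = K + {k}^2 + \Gamma(d\vphi) - (d\vphi)^2 - E_2{k}$ and $\alpha = (V{k} - 2\,d\vphi)\,e^{-\vphi}(\psi - {k}\theta_1)$. Since $\theta_2$ and $\psi - {k}\theta_1$ are pointwise linearly independent members of the coframe, the right-hand side is a scalar multiple of $\iota_\Gamma\pi^*\omega = e^{-\vphi}\theta_2$ if and only if its $(\psi - {k}\theta_1)$-component vanishes, i.e. $\alpha = 0$. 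Thus by condition 2 of Proposition \ref{nncprop}, $(M,\Gamma,\omega)$ is nonnegatively curved if and only if both $\alpha = 0$ and $Q\ge 0$: when $\alpha = 0$ the coefficient $Q$ is forced, and $Q\ge 0$ is precisely the displayed inequality \eqref{nncRiemeq}.

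It remains to identify the condition $\alpha = 0$, i.e. $V{k} = 2\,d\vphi$ as functions on $TM$, with magnetism of $\Gamma$ with respect to $\tilde g := e^{-4\vphi}g$. This is the step I expect to be the crux. Writing $\tilde g = e^{2f}g$ with $f = -2\vphi$ and invoking Lemma \ref{conformalspraylemma}, the $\tilde g$-geodesic spray is $\tilde E_1 = E_1 - \star df\,V - df\,R$; since the radial field $R$ and the rotation field $V$ are shared by conformal metrics, rewriting $\Gamma = E_1 + {k}V$ in the frame $(\tilde E_1, V, R)$ yields $\tilde {k} = {k} + \star df = {k} - 2\star d\vphi$. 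Hence $\Gamma$ is magnetic with respect to $\tilde g$, i.e. $V\tilde{k} = 0$, precisely when $V{k} = 2\,V(\star d\vphi)$. The remaining ingredient is the identity $V(\star d\vphi) = d\vphi$: interpreting a $1$-form $\beta$ as a fibrewise-linear function on $TM$, the rotation field acts by $V\beta = -\star\beta$, because its generator sends $v$ to $v^\perp$ and $\beta(v^\perp) = -(\star\beta)(v)$; applying this to $\beta = \star d\vphi$ and using $\star^2 = -1$ on $1$-forms in dimension two gives $V(\star d\vphi) = -\star\star d\vphi = d\vphi$. Therefore $V\tilde{k} = 0 \iff V{k} = 2\,d\vphi \iff \alpha = 0$, which completes the equivalence of (1) and (2).

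For the special case $\omega = \omega_g$ I would set $\vphi \equiv 0$. Then $e^{-4\vphi}g = g$, so the magnetism condition becomes magnetism with respect to $g$, and $Q = K + {k}^2 - E_2{k}$. Writing ${k} = \kappa\circ\pi$ on $SM$ for the geodesic curvature function and using that $E_2$ is tangent to $SM$ with $d\pi(E_2)|_v = v^\perp$ (so that $E_2{k}|_v = \langle\nabla\kappa, v^\perp\rangle$), the inequality $Q\ge 0$ on all of $SM$ reads $K + \kappa^2 - \langle\nabla\kappa, v^\perp\rangle \ge 0$ for every unit $v$. As $v^\perp$ ranges over the unit circle the supremum of $\langle\nabla\kappa, v^\perp\rangle$ equals $|\nabla\kappa|$, so the inequality holds for all directions if and only if $K + \kappa^2 - |\nabla\kappa|\ge 0$, as claimed.
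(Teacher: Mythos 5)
Your proposal is correct and follows essentially the same route as the paper's proof: it reads off the condition $V{k} - 2\,d\vphi = 0$ and $Q \ge 0$ from Lemma \ref{Kgammalemma} via the linear independence of $\theta_2$ and $\psi - {k}\theta_1$, identifies the former with magnetism for $e^{-4\vphi}g$ through Lemma \ref{conformalspraylemma} and the identity $V(\star d\vphi) = d\vphi$, and then specializes to $\vphi \equiv 0$. The only cosmetic differences are that you verify $V(\star d\vphi) = d\vphi$ coordinate-freely via $V\beta = -\star\beta$ (the paper computes in normal coordinates) and evaluate $E_2{k}$ on $SM$ using $d\pi(E_2)\vert_v = v^\perp$ rather than the commutator $E_2 = [V,E_1]$.
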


\begin{remark}
	In the case $\vphi \equiv 0$, the quantity $K + {k}^2 - E_2{k}$ on the left hand side of \eqref{nncRiemeq} coincides with the Ricci scalar of the spray $\Gamma$ as defined in e.g. \cite{Sh}.
\end{remark}

\begin{proof}
	Since $\Gamma$ is metric, ${h} \equiv 0$. By Proposition \ref{nncprop} and Lemma \ref{Kgammalemma}, the weighted spray space $(M,\Gamma,\omega)$ is nonnegatively curved if and only if $V{k} - 2d \vphi = 0$  and inequality \eqref{nncRiemeq} holds.  By Lemma \ref{conformalspraylemma},  
	$$\Gamma = E_1 + {k} V =  \tilde E_1 + ({k} - 2\star d\vphi) V - 2 d\vphi \, R,$$ 
	where $\tilde E_1$ is the geodesic spray of $\tilde g = e^{-4\vphi}g$. Thus $\Gamma$ is magnetic with respect to $\tilde g$ if and only if 
	$$0 = V({k} - 2 \star d\vphi) = V{k} - 2d\vphi,$$
	as desired. Here the second equality holds true because, if $(x,y)$ are normal coordinates at $p \in M$ and $(x,y,u,v)$ are the corresponding canonical local coordinates on $TM$, then at $p$ we have
	$$V(\star d\vphi) = (-v\partial_u + u \partial_v)(-\vphi_yu + \vphi_xv) = v \vphi_y + u\vphi_x = d\vphi.$$

	\medskip
	Suppose that $\vphi \equiv 0$. Then $(M,\Gamma,\omega = \omega_g)$ is nonnegatively curved if and only if $\Gamma$ is magnetic with respect to $g$, and $K + {k}^2 - E_2{k} \ge 0$. If $\Gamma$ is magnetic then $V{k} = 0$, and therefore, writing ${k} = \sqrt{g}\cdot \kappa\circ\pi$ for a function $\kappa : M \to \RR$, we have $E_2{k} = [V,E_1]{k} = VE_1{k} - E_1V{k} = V(d\kappa) = - \star d\kappa$. It follows that $(M,\Gamma,\omega_g)$ is nonnegatively curved if and only if $M$ is magnetic and the function $\kappa$ satisfies $0 \le K + \kappa^2 - |\star \, d\kappa| = K + \kappa^2 - |\nabla\kappa|$.
\end{proof}

\begin{remark}\label{CDrmk}
	In the spirit of curvature-dimension theory of Bakry-Emery \cite{BGL} and Lott-Sturm-Villani \cite{LV,sturm}, one can extend the notion of a nonnegatively curved weighted spray space on a surface to that of a $CD(r,N)$ weighted spray space for any $r \in \RR$ and $N \ne 1$, in which the requirement $J'' \le 0 $ (with $J$ defined as in Proposition \ref{nncprop}) is replaced by the condition
	\begin{equation}\label{CDKNeq}\frac{J''}{J} - \left(\frac{N-2}{N-1}\right)\left(\frac{J'}{J}\right)^2 + r \le 0\end{equation}
	(see Theorem \ref{sprayBMCDthm} below for the motivation for this definition in the case $r=0$). Let $\Gamma$ be a spray on a Riemannian surface $(M,g)$ and suppose for simplicity that $\Gamma$ is metric. Let $\omega = e^{-\vphi}\omega_g$ be a volume form on $M$. Let $TTM^+:=\{\xi \in TTM \mid \theta_2(\xi) > 0\}$ and define a function $z : TTM^+ \to \RR$ by $$z(\xi) : = \frac{\psi(\xi) - {k}\theta_1(\xi)}{\theta_2(\xi)}.$$ 
	A straightforward modification of Proposition \ref{nncprop} and Lemma \ref{Kgammalemma} and their proofs shows that \eqref{CDKNeq} holds for every transversal $\Gamma$-Jacobi field if and only if
	\begin{equation}\label{sprayCDKNpoly}-Q + Bz -\frac{N-2}{N-1}(z - d\vphi)^2 + r \le 0 \qquad \text{ on } TTM^+,\end{equation}
	where 
	$$Q : = K + {k}^2  +\Gamma(d\vphi) - (d\vphi)^2  - E_2{k} \qquad \text{ and } \qquad B : = V{k} - 2d\vphi.$$
	Note that $Q$ and $B$ are functions on $TM$. Since, by linear independence of the one forms $\psi-{k}\theta_1$ and $\theta_2$, the function $z$ can attain any value on each fiber of $TTM^+$, the expression on the left hand side of \eqref{sprayCDKNpoly}, viewed as a polynomial in $z$ whose coefficients are functions on $TM$, must be nonnegative for all values of $z$. For $N\notin [1,2]$ this is equivalent to
	$$Q - d\vphi\cdot B - \frac{N-1}{N-2}\cdot\frac{B^2}{4} \ge r \qquad \text{ on } TM,$$
	or
	$$K + {k}^2 + \Gamma(d\vphi) - E_2{k} - \frac{\left((N-1)V{k} - 2\,d\vphi\right)^2}{4(N-1)(N-2)} - \frac{(d\vphi)^2}{N-1} \ge r \qquad \text{ on } TM.$$
	We remark that when $\Gamma$ is the geodesic spray, i.e. ${k} = 0$, the expression on the left hand side equals the generalized Ricci curvature of the weighted Riemannian manifold $(M,g,\omega)$.
\end{remark}

\section{Brunn-Minkowski for sprays}\label{BMsec}

Fix a two-dimensional weighted spray space $(M,\Gamma,\omega)$. Given two subsets $A,B \subseteq M$ and $0 < \lambda < 1$, we define 
$$\cM_\Gamma(A,B;\lambda) := \left\{\gamma(\lambda) \mid \gamma \text{ is a $\Gamma$-geodesic }, \, \gamma(0) \in A, \, \gamma(1) \in B\right\}.$$

For example, if $M = \RR^2$ and $\Gamma$ is the flat spray, then $\cM_\Gamma(A,B;\lambda) = (1-\lambda)A + \lambda B$, where $+$ denotes Minkowski summation. If $M$ is a Riemannian surface and $\Gamma$ is its geodesic spray, then $\cM_\Gamma(A,B;\lambda)$ is similar to the operation defined in \cite{CMS,sturm}, except that in our definition we do not require the geodesics in the definition of $\cM_\Gamma(A,B;\lambda)$ to be minimizing.

\medskip
Denote by $\mu$ the unique Borel measure on $M$ which satisfies 
$$\mu(A) = \int_A\omega$$ for every open set $A \subseteq M$. In this section we characterize, under some assumptions on the spray, two-dimensional weighted spray spaces $(M,\Gamma,\omega)$ satisfying the Brunn-Minkowski inequality
	\begin{equation}\label{sprayBM}\tag{BM}
			\mu(\cM_\Gamma(A,B;\lambda))^{1/2} \ge (1-\lambda)\cdot\mu(A)^{1/2} + \lambda\cdot\mu(B)^{1/2}
	\end{equation}
for every nonempty Borel subsets $A,B \subseteq M$.
\begin{theorem}\label{sprayBMthm}
	Let $(M,\Gamma,\omega)$ be a simple, nonnegatively curved, two-dimensional weighted spray space. Suppose that $\Gamma$ is projectively Finsler-metrizable. Then \eqref{sprayBM} holds for every nonempty Borel subsets $A,B \subseteq M$ of positive measure and every $0<\lambda<1$.
\end{theorem}

The central tool in the proof of Theorem \ref{sprayBMthm} is Theorem \ref{needledecomp} below, which is a needle decomposition theorem for sprays, similar to (and generalizing) the horocyclic needle decomposition theorem \cite[Theorem 3.1]{AK}. 

\begin{definition}[Jacobi needle]\normalfont\label{jacneedledef}
	Let $M$ be a two-dimensional oriented manifold, let $\Gamma$ be a spray on $M$ and let $\omega$ be a volume form on $M$. Let $\gamma = \gamma(t)$ be a $\Gamma$-geodesic. A measure $\mu$ on $M$ will be called a \emph{$(\Gamma,\omega)$-Jacobi needle along $\gamma$} if there exists a transversal $\Gamma$-Jacobi field $S$ along $\gamma$ such that 
	\begin{equation}\label{jacneedleform}\mu = \gamma_\#\left(J(t)dt\right),\end{equation}
	where $$J(t) : = \omega(\dot\gamma(t),S(t))$$ and $\#$ denotes pushforward. A Dirac mass (i.e. a measure supported on a single point) is also considered a $(\Gamma,\omega)$-Jacobi needle.
\end{definition}
Intuitively, a $(\Gamma,\omega)$-Jacobi needle should be thought of as the restriction of $\omega$ to an infinitesimally thin strip made out of $\Gamma$-geodesics. It is intuitively clear that the notion of a $(\Gamma,\omega)$-Jacobi needle depends only on the projective class of $\Gamma$. Let us prove this fact, which will be useful for us.
\begin{lemma}\label{jacprojlemma}
	Let $\omega$ be a volume form on a two-dimensional manifold $M$ and let $\Gamma$ and $\tilde \Gamma$ be projectively-equivalent sprays on $TM$. Then every $(\Gamma,\omega)$-Jacobi needle is also a $(\tilde\Gamma,\omega)$-Jacobi needle.
\end{lemma}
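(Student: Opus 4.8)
The plan is to exploit the fact that projectively equivalent sprays share the same geodesics up to an orientation-preserving reparametrization, and to check that both the transverse Jacobi datum $J$ and the resulting pushforward measure are unaffected by such a reparametrization. Since a Dirac mass is a Jacobi needle for every spray, I may assume the given $(\Gamma,\omega)$-Jacobi needle has the form $\mu = \gamma_\#(J\,dt)$ with $J(t) = \omega(\dot\gamma(t),S(t))$ for a transversal $\Gamma$-Jacobi field $S$ along a nonconstant $\Gamma$-geodesic $\gamma$; in particular $J > 0$ and $\dot\gamma \neq 0$ throughout.

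First I would realize $S$ by a variation: pick a smooth $F(a,t)$ through $\Gamma$-geodesics with $F(0,\cdot) = \gamma$ and $\partial_a F(0,\cdot) = S$. To pass to $\tilde\Gamma$, let $\tilde\Phi$ denote the flow of $\tilde\Gamma$ and set
$$\tilde F(a,s) := \pi\bigl(\tilde\Phi_s(\dot F(a,0))\bigr).$$
This is smooth in $(a,s)$ and is, for each fixed $a$, the $\tilde\Gamma$-geodesic with the same initial velocity as $F(a,\cdot)$; hence it is a variation through $\tilde\Gamma$-geodesics, and $\tilde S(s) := \partial_a \tilde F(0,s)$ is automatically a $\tilde\Gamma$-Jacobi field along $\tilde\gamma := \tilde F(0,\cdot)$ by condition 1 in the definition of a Jacobi field. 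Because $\Gamma$ and $\tilde\Gamma$ are projectively equivalent and $F(a,\cdot)$, $\tilde F(a,\cdot)$ have equal initial velocities, they trace the same oriented curve, so $\tilde F(a,s) = F(a,\sigma_a(s))$ for an increasing reparametrization $\sigma_a$ with $\sigma_a(0)=0$; write $\sigma := \sigma_0$, which is an increasing diffeomorphism onto the relevant parameter interval.

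The key computation is the effect on $J$. Splitting the difference quotient for $\partial_a\tilde F(0,s)$ into the contributions of the two slots of $F$ shows that
$$\tilde S(s) = S(\sigma(s)) + c(s)\,\dot\gamma(\sigma(s))$$
for a scalar function $c$; the limit defining $c$ exists because the other two limits do and $\dot\gamma \neq 0$. Since $\dot{\tilde\gamma}(s) = \sigma'(s)\,\dot\gamma(\sigma(s))$ and $\omega(\dot\gamma,\dot\gamma) = 0$, the tangential term drops out and
$$\tilde J(s) := \omega(\dot{\tilde\gamma}(s),\tilde S(s)) = \sigma'(s)\,J(\sigma(s)).$$
As $\sigma' > 0$ and $J > 0$, the field $\tilde S$ is transversal, and the substitution $t = \sigma(s)$ gives $\tilde\gamma_\#(\tilde J\,ds) = \gamma_\#(J\,dt) = \mu$, exhibiting $\mu$ as a $(\tilde\Gamma,\omega)$-Jacobi needle.

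I expect the main obstacle to be the bookkeeping around the reparametrization $\sigma_a$: one must ensure that $\tilde F$ is a genuine smooth variation through $\tilde\Gamma$-geodesics on a fixed parameter interval, and that the decomposition of $\tilde S$ into a copy of $S\circ\sigma$ plus a purely tangential term is legitimate even before establishing smoothness of $a\mapsto\sigma_a$. Defining $\tilde F$ directly through the flow $\tilde\Phi$ sidesteps verifying the variation property by hand, and the tangential-term decomposition follows from the mere existence of $\tilde S$ together with $\dot\gamma \neq 0$; the remainder is a change of variables.
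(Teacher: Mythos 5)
Your proposal is correct and follows essentially the same route as the paper's proof: realize the needle by a variation through $\Gamma$-geodesics, pass to the reparametrized variation through $\tilde\Gamma$-geodesics, observe that the new Jacobi field differs from $S\circ\sigma$ only by a term tangent to $\gamma$ so that $\tilde J = \sigma'\cdot (J\circ\sigma)$, and finish by a change of variables in the pushforward. The only difference is bookkeeping: the paper posits a jointly smooth reparametrization $t(s,\tau)$ and applies the chain rule, whereas you build $\tilde F$ from the flow of $\tilde\Gamma$ and extract the tangential term from difference quotients; both versions rest on the same (standard, and in both cases not fully spelled out) regularity of the reparametrization in the variation parameter.
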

\begin{proof}
	Suppose that $\mu$ is a $(\Gamma,\omega)$-Jacobi needle. We may assume that $\mu$ is not a Dirac mass (because then the statement is trivial), whence it takes the form \eqref{jacneedleform}, with $\gamma,S,J$ as in Definition \ref{jacneedledef}. Let $F(s,t)$ be a variation through $\Gamma$-geodesics such that $S = F_*(\partial_s\vert_{s=0})$. Since $\Gamma$ and $\tilde \Gamma$ are projectively-equivalent, their geodesics differ by an orientation-preserving reparametrization. Thus there exists a smooth function $t = t(s,\tau)$, strictly increasing in $\tau $, such that the following holds: if we set
	\begin{equation}
		\tilde F(s,\tau) = F(s,t(s,\tau))
	\end{equation}
	then the curve
		$$\tau \mapsto \tilde F(s,\tau)$$
	is a $\tilde\Gamma$-geodesic for every $s$. In particular, 
	$$\tilde\gamma(\tau) = \tilde F(0,\tau) = F(0,t(0,\tau)) = \gamma(t(0,\tau))$$
	is a $\tilde\Gamma$-geodesic, and the vector field 
	$$\tilde S = \tilde F_*(\partial_s\vert_{s=0})$$
	 along $\tilde\gamma$ is a $\tilde\Gamma$-Jacobi field induced by the variation $\tilde F$. By the chain rule,
		$$\tilde S = F_*(\partial _s\vert_{s=0}) + \frac{\partial t}{\partial s}\cdot F_*(\partial_t\vert_{s=0}) = S + \frac{\partial t}{\partial s} \cdot \dot\gamma.$$
		Thus
		$$\omega(\dot{\tilde\gamma},\tilde S) = \omega((\partial t/\partial \tau)\dot\gamma,S + (\partial t/\partial s)\dot\gamma) = (\partial t/\partial\tau)\omega(\dot\gamma,S)$$
		and therefore
		$$\mu = \gamma_\#(\omega(\dot\gamma,S)dt) = \tilde\gamma_\#((\partial t/\partial \tau)\omega(\dot\gamma,S)d\tau) = \tilde\gamma_\#(\omega(\dot{\tilde\gamma},\tilde S)d\tau).$$
	It follows that $\mu$ is a $(\tilde\Gamma,\omega)$-Jacobi needle.
\end{proof}
\begin{theorem}[Needle decomposition for pFm sprays]\label{needledecomp}
	Let $M$ be a two-dimensional manifold, let $\Gamma$ be a projectively Finsler-metrizable spray on $M$ and let $\omega$ be a volume form on $M$. Let $\rho_1, \rho_2: M \rightarrow [0, \infty)$ be compactly-supported measurable functions with
	$$ \int_M \rho_1 \omega = \int_M \rho_2 \omega < \infty. $$
	Then there is a collection $\Lambda$ of disjoint $\Gamma$-geodesics, a measure $\nu$ on $\Lambda$ and a family $\{\mu_\gamma\}_{\gamma \in \Lambda}$ of Borel measures on $M$ such that the following hold:
	\begin{enumerate}
		\item[(i)] For $\nu$-almost every $\gamma\in \Lambda $, the measure $\mu_{\gamma}$ is a $(\Gamma,\omega)$-Jacobi needle along $\gamma$.
		\item[(ii)] (``disintegration of measure'') For any measurable set $S \susbeteq M$,
		\begin{equation}\label{horodisinteqn}
		\int_S\omega = \int_{\Lambda} \mu_{\gamma}(S) d \nu(\gamma).
		\end{equation}
		
		\item[(iii)] (``mass balance'') For $\nu$-almost any $\gamma \in \Lambda$,
		\begin{equation}\label{MBhorodecomp}	\int_{M} \rho_1 d \mu_{\gamma} = \int_{M} \rho_2 d \mu_{\gamma}, \end{equation}
		and moreover
		\begin{equation}\label{MBhoroendsdecomp}	\int_{M} \rho_1 d \mu_{\gamma^+} \le \int_{M} \rho_2 d \mu_{\gamma^+} \end{equation}
		whenever $\gamma^+$ is a positive end of $\gamma$. Here a curve $\gamma^+$ is said to be a positive end of $\gamma$ if it is a restriction of $\gamma$ to a subinterval with the same upper endpoint, and the measure $\mu_{\gamma^+}$ is the restriction of $\mu_\gamma$ to the image of $\gamma^+$.
	\end{enumerate}
\end{theorem}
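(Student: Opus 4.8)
The plan is to prove Theorem \ref{needledecomp} by the $L^1$-optimal transport (localization) method, following the Riemannian needle decomposition of Klartag \cite{K1} and its Finsler extensions by Ohta \cite{Oh} and Cavalletti--Mondino \cite{CM1}; the ingredient specific to sprays has already been isolated in Lemma \ref{jacprojlemma}. First I would reduce to the geodesic spray of a Finsler metric. By hypothesis $\Gamma$ is projectively Finsler-metrizable, so there is a geodesically convex Finsler metric $\cF$ whose geodesics coincide, up to orientation-preserving reparametrization, with the $\Gamma$-geodesics. By Lemma \ref{jacprojlemma} the class of $(\Gamma,\omega)$-Jacobi needles coincides with the class of Jacobi needles of the geodesic spray of $\cF$, and any $\cF$-geodesic segment is a $\Gamma$-geodesic after reparametrization. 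Hence it suffices to produce a decomposition whose needles are $\cF$-geodesics and whose conditional measures are $\cF$-Jacobi needles. This trade lets me replace the abstract spray by a genuine (asymmetric) metric space $(M,d_\cF)$ on which Kantorovich duality and the theory of transport rays are available.

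Second, I would set up the transport problem and extract the needles. Put $g := \rho_1 - \rho_2$, so that $g$ is compactly supported, integrable, and $\int_M g\,\omega = 0$. Invoking Kantorovich duality for the cost $d_\cF$ produces a $1$-Lipschitz potential $u$, i.e. $u(x)-u(y)\le d_\cF(y,x)$, that is optimal for transporting $g_+\omega$ to $g_-\omega$; existence follows from compactness of the supports. The maximal segments along which $u$ decreases at unit $\cF$-speed are the transport rays. The localization machinery of \cite{K1,Oh,CM1} then shows that these rays are honest $\cF$-geodesics, that they partition the transport set up to an $\omega$-null set, and that $\omega$ disintegrates along them: there is a set $\Lambda$ of rays, a measure $\nu$ on $\Lambda$, and conditional measures $\mu_\gamma$ satisfying \eqref{horodisinteqn}, with Dirac masses appearing on rays that degenerate to points.

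Third, I would identify the conditional densities with transversal Jacobi fields and deduce the mass balance. The needle theory gives that each $\mu_\gamma$ is absolutely continuous along $\gamma$, with density governed by the Jacobian of the geodesic flow transverse to $\gamma$; expressing this Jacobian through the variation of geodesics emanating from a fixed transversal and recalling the identification $J(t)=\omega(\dot\gamma(t),S(t))$ of Definition \ref{jacneedledef}, one checks that $\mu_\gamma = \gamma_\#(J\,dt)$, which yields (i). For (iii), the defining property of the Kantorovich potential forces the transport to be monotone along each ray in the direction of decreasing $u$; since the global masses of $\rho_1$ and $\rho_2$ agree and the problem disintegrates ray by ray, the masses $\int_M \rho_1\,d\mu_\gamma$ and $\int_M \rho_2\,d\mu_\gamma$ coincide, giving \eqref{MBhorodecomp}, while restricting to a positive end $\gamma^+$ retains only the mass lying beyond the cut point, producing the one-sided inequality \eqref{MBhoroendsdecomp}.

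The main obstacle is the third step: matching the abstract, measure-theoretic conditional density produced by localization to the concrete geometric quantity $\omega(\dot\gamma,S)$, and carrying this out in the Finsler rather than Riemannian category, where the exponential map is only $C^1$ (Theorem \ref{white}) and the distance is asymmetric. One must also control the regularity of the transport rays, namely that they do not branch, that the non-transport and endpoint sets are $\omega$-negligible, and that the family of rays carrying nontrivial needles is measurable. These are the technically heaviest parts of \cite{K1,Oh,CM1}, and they are the points that \cite{AK} adapts to the present two-dimensional setting.
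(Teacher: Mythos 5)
Your overall route is the same as the paper's: reduce to the geodesic spray of a geodesically convex Finsler metric $\cF$ via projective equivalence, run the $L^1$-transport/localization needle decomposition in the Finsler setting, and transfer Jacobi needles back to $\Gamma$ with Lemma \ref{jacprojlemma}. The one structural difference is that the paper does not rebuild the transport-ray machinery at all: it invokes \cite[Theorem 4.7]{AK} (stated precisely for geodesically convex Finsler surfaces) as a black box, which yields conclusions (ii) and (iii) immediately; the entire written proof is devoted to conclusion (i). Your plan to re-derive the Kantorovich potential, transport rays, disintegration and ray-wise mass balance from \cite{K1,Oh,CM1} is not wrong, but it re-proves what is already being cited.

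Where your proposal has a genuine gap is exactly the point you flag as ``the main obstacle'' and then dispatch with ``one checks'': the identification of the conditional measure as a Jacobi needle, i.e.\ conclusion (i). The difficulty is that the localization produces, for $\nu$-a.e.\ non-degenerate needle, only a locally-Lipschitz ``line cluster'' $F : B \to M$ whose rows $F(y,\cdot)$ are $\cF$-geodesics and whose conditional density along $\gamma = F(y_0,\cdot)$ is $c\,|\det dF(y_0,t)|$; this is an almost-everywhere Jacobian of a Lipschitz map, not yet the derivative of a smooth variation, so it is not immediately of the form $\omega(\dot\gamma(t),S(t))$ for a transversal Jacobi field $S$. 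The paper closes this in three steps: (a) it cites \cite[Lemma 4.9]{AK} to know that $t \mapsto \det dF(y_0,t)$ does not change sign, so after precomposing with an affine map one may assume it is positive and $c=1$; (b) it replaces $F$ by a \emph{smooth} variation through $\cF$-geodesics having the same Jacobian determinant along the base line, so the density becomes $\omega(\dot\gamma,S)$ for a genuine transversal $\Gamma_\cF$-Jacobi field $S$; (c) it applies Lemma \ref{jacprojlemma} to pass from $\Gamma_\cF$ to $\Gamma$, and it handles Dirac masses trivially since the definition of a Jacobi needle admits them. Without step (b), or some substitute matching the measure-theoretic density to a smooth Jacobi field, conclusion (i) remains unproved; the remainder of your outline (monotone transport along rays giving \eqref{MBhorodecomp} and the one-sided inequality \eqref{MBhoroendsdecomp} on positive ends) is standard and agrees with what \cite{AK} establishes.
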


\begin{proof}
	If $M$ is a geodesically-convex Finsler manifold and $\Gamma$ is its  geodesic spray, then the conclusion follows directly from \cite[Theorem 4.7]{AK}, except that the notion of a Jacobi needle is not discussed there, but we shall deal with this point below. 

	\medskip
	By assumption, there exists a geodesically convex Finsler metric $\cF$ on $M$ such that $\Gamma$ is projectively equivalent to the geodesic spray of $\cF$. Since the geodesics of $\cF$ coincide with $\Gamma$-geodesics as oriented curves, we immediately obtain conclusions (ii),(iii) of Theorem \ref{needledecomp} for our spray $\Gamma$. It remains to prove conclusion (i), namely, that $\mu_\gamma$ is a $(\Gamma,\omega)$-Jacobi needle for $\nu$-almost every $\gamma$. To this end we recall some facts and notations from the proof of \cite[Theorem 4.7]{AK}.

	\medskip
	For $\nu$-almost every measure $\mu_\gamma$, either $\mu_\gamma$ is a Dirac mass, in which case it is trivially a $(\Gamma,\omega)$-Jacobi needle, or else it takes the following form. There exist
	\begin{enumerate}
		\item a Borel set $B\subseteq\RR^2$ of the form
		\begin{equation}\label{lineclusterformk}
					B = \left \{ (y,t) \in \RR^2 \, ; \, y \in Y, \ a_{y} < t < b_{y} \right \},
		\end{equation}
		where $Y \subseteq \RR$ is a Borel set and $a_y,b_y$ are measurable in $y$ with $a_y<b_y$, and
		\item a locally-Lipschitz function $F : B \to M$ such that $F(y,\cdot)$ is a constant-speed geodesic of $\cF$ for almost every $y \in Y$,
	\end{enumerate}
	and $\mu_\gamma$ is given by
	$$\mu_\gamma : = \gamma_\#\left(c \cdot |\det dF(y_0,t)|dt\right)$$
	for some $y_0\in Y$ and some $c > 0$. In particular $F$ is differentiable in $(y_0,t)$ for all $t \in (a_{y_0},b_{y_0})$. The determinant here is defined by $\det dF = (F^*\omega)(\partial_y,\partial_t)$. It is also proved in Lemma 4.9 in \cite{AK} that the function $t\mapsto\det dF(y_0,t)$ does not change sign. By precomposing $F$ with a map of the form $(y,t) \mapsto (\ell(y),t)$ where $\ell$ is affine, we may assume that $y_0 = 0$, $\det dF(y_0,t) > 0$ for all $t$, and $c = 1$. It is also not hard to replace $F$ (which is only known to be locally-Lipschitz) by a smooth variation through geodesics of $\cF$ which has the same Jacobian determinant at $s=0$. We then immediately see that $\mu_\gamma$ is a $\Gamma_\cF$-Jacobi needle, where $\Gamma_\cF$ is the geodesic spray of $\cF$. Since $\Gamma$ is projectively equivalent to $\Gamma_\cF$, it follows from Lemma \ref{jacprojlemma} that $\mu_\gamma$ is a $(\Gamma,\omega)$-Jacobi needle.
\end{proof}

With the needle decomposition theorem at hand, we are now ready to prove Theorem \ref{sprayBMthm}.

\begin{proof}[Proof of Theorem \ref{sprayBMthm}]
The proof is practically the same as the proof of \cite[Theorem 1.1]{AK}. The idea is to use Theorem \ref{needledecomp} to decompose the measure $\mu$ into a family of $(\Gamma,\omega)$-Jacobi needles, prove Brunn-Minkowski on each needle (Lemma \ref{lem_924}) using the assumption of nonpositive curvature of the spray, and then integrate the one-dimensional inequalities to obtain \eqref{sprayBM}. 

\medskip
Let $A,B \subseteq M$ be nonempty, Borel measurable sets and let $0<\lambda<1$. The set $\cM_\Gamma(A,B;\lambda)$ is Lebesgue measurable. Indeed, the set
$$\{(x,y,m) \mid x,y\in M, \, m\in \cM_\Gamma(\{x\},\{y\};\lambda)\}$$
is a closed subset of $M^2 \times M$, and we have 
$$\cM_\Gamma(A,B;\lambda) = \pi_2\left(\pi_1^{-1}(A\times B)\right),$$
where $\pi_1: M^2\times M \to M^2$ and $\pi_2: M^2\times M \to M$ are the projections, which are Borel-measurable.

\medskip
We may assume, by a standard approximation argument, that both $A$ and $B$ are compact, and in particular,
 $\mu(A)$ and $\mu(B)$ are finite. Apply Theorem \ref{needledecomp} with
\begin{equation}\label{rho12def} \rho_1 = \frac{\chi_A}{\mu(A)} \qquad \text{ and } \qquad \rho_2 = \frac{\chi_B}{\mu(B)}\end{equation}
to obtain measures $\{\mu_\gamma\}_{\gamma \in \Lambda}$ and $\nu$ with the properties (i)-(iii) in Theorem \ref{needledecomp}. Here $\chi_A$ is the indicator
function of the set $A$.
By \eqref{MBhorodecomp} and \eqref{rho12def}, for $\nu$-almost any $\gamma \in \Lambda$, if $0 < \mu_\gamma(A) <\infty$ then
\begin{equation}
\frac{\mu_{\gamma}(A)}{\mu(A)} = \frac{\mu_{\gamma}(B)}{\mu(B)}. \label{eq_1247}
\end{equation}
Since $A$ has finite measure, by \eqref{horodisinteqn} we know that $\mu_\gamma(A)  < \infty$ for $\nu$-almost any $\gamma \in \Lambda$. 

\begin{lemma}\label{lem_924} For $\nu$-almost any $\gamma \in \Lambda$, if $0 < \mu_\gamma(A) < \infty$ then
\begin{equation}\label{muconcavity}
	\mu_\gamma(\cM_\Gamma(A,B;\lambda))^{1/2} \ge (1-\lambda)\, \mu_\gamma(A)^{1/2} + \lambda\, \mu_\gamma(B)^{1/2}.
\end{equation}
\end{lemma}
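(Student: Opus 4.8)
The plan is to prove the one-dimensional inequality \eqref{muconcavity} by transferring the problem to the real line through the parametrization of $\gamma$ and then combining the concavity of the needle density with the mass-balance data. Fix a needle $\gamma$ for which $\mu_\gamma$ is a genuine Jacobi needle (the Dirac case being trivial) with $0 < \mu_\gamma(A) < \infty$; write $\mu_\gamma = \gamma_\#(J\,dt)$ with $J(t) = \omega(\dot\gamma(t),S(t))$ as in Definition \ref{jacneedledef}, and set $A_\gamma := \{t : \gamma(t) \in A\}$ and $B_\gamma := \{t : \gamma(t) \in B\}$, so that $\mu_\gamma(A) = \int_{A_\gamma} J$ and $\mu_\gamma(B) = \int_{B_\gamma} J$. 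Since the spray is simple and $\gamma$ is injective on the needle, $\mu_\gamma(\cM_\Gamma(A,B;\lambda)) = \int_{M_\gamma} J$, where $M_\gamma := \{t : \gamma(t) \in \cM_\Gamma(A,B;\lambda)\}$. The first step is the geometric reduction: if $a \in A_\gamma$ and $b \in B_\gamma$ with $a \le b$, then $s \mapsto \gamma((1-s)a + sb)$ is an orientation-preserving constant-speed reparametrization of $\gamma|_{[a,b]}$, hence a $\Gamma$-geodesic joining $\gamma(a)\in A$ to $\gamma(b)\in B$, and its value at $s=\lambda$ shows $(1-\lambda)a + \lambda b \in M_\gamma$. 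The ordering $a \le b$ is genuinely needed, because the spray need not be reversible, so only ``forward'' pairs are admissible — and this is exactly where the positive-end condition \eqref{MBhoroendsdecomp} will enter.

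Next I would produce the pairing. By \eqref{MBhoroendsdecomp} applied to the densities \eqref{rho12def}, for every $s$ one has $\frac{1}{\mu(A)}\int_{A_\gamma\cap(s,\infty)} J \le \frac{1}{\mu(B)}\int_{B_\gamma\cap(s,\infty)} J$; dividing by the common ratio from \eqref{eq_1247}, this says that $\frac{1}{\mu_\gamma(B)}J\chi_{B_\gamma}\,dt$ stochastically dominates $\frac{1}{\mu_\gamma(A)}J\chi_{A_\gamma}\,dt$ from the right. Consequently the monotone rearrangement $T$ pushing the latter probability measure forward to the former is non-decreasing, satisfies $T(a)\ge a$ and $T(a)\in B_\gamma$ for a.e.\ $a\in A_\gamma$, and obeys the Jacobian identity $T'(a)\,J(T(a)) = \tfrac{\mu_\gamma(B)}{\mu_\gamma(A)}\,J(a)$. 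Setting $\Theta(a) := (1-\lambda)a + \lambda T(a)$, the map $\Theta$ is strictly increasing with $\Theta'(a) = (1-\lambda) + \lambda T'(a)$, and by the first step $\Theta(A_\gamma)\subseteq M_\gamma$ (up to a null set).

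The final step is the computation. Because $(M,\Gamma,\omega)$ is nonnegatively curved, Proposition \ref{nncprop} gives that $J$ is concave, so $J(\Theta(a)) \ge (1-\lambda)J(a) + \lambda J(T(a))$. Using $\Theta(A_\gamma)\subseteq M_\gamma$ and the change of variables $u=\Theta(a)$,
\begin{equation*} \mu_\gamma(\cM_\Gamma(A,B;\lambda)) \ge \int_{A_\gamma} J(\Theta(a))\,\Theta'(a)\,da \ge \int_{A_\gamma} \big[(1-\lambda) + \lambda T'(a)\big]\big[(1-\lambda)J(a) + \lambda J(T(a))\big]\,da. \end{equation*}
Expanding the integrand, the two diagonal terms become $(1-\lambda)^2 J(a) + \lambda^2\tfrac{\mu_\gamma(B)}{\mu_\gamma(A)}J(a)$ by the Jacobian identity, while the cross terms $\lambda(1-\lambda)\big[J(T(a)) + T'(a)J(a)\big]$ are bounded below, by the arithmetic–geometric mean inequality and the same identity, by $2\lambda(1-\lambda)\sqrt{\mu_\gamma(B)/\mu_\gamma(A)}\,J(a)$. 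Thus the integrand is at least $J(a)\big[(1-\lambda) + \lambda\sqrt{\mu_\gamma(B)/\mu_\gamma(A)}\big]^2$, and integrating over $A_\gamma$ (where $\int_{A_\gamma}J = \mu_\gamma(A)$) yields $\mu_\gamma(\cM_\Gamma(A,B;\lambda)) \ge \big[(1-\lambda)\mu_\gamma(A)^{1/2} + \lambda\mu_\gamma(B)^{1/2}\big]^2$, which is \eqref{muconcavity}.

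The main obstacle I expect is the regularity of the monotone transport $T$: justifying that $T$ is differentiable a.e., that the Jacobian identity and the change-of-variables formula hold in the required sense, and that $\Theta(A_\gamma)$ is measurable and contained in $M_\gamma$ up to null sets. These are standard facts for monotone rearrangements of absolutely continuous measures on the line, but they must be invoked with care. Alternatively, one can sidestep them entirely by using the same stochastic-dominance information to first reduce to the case where $A_\gamma$ and $B_\gamma$ are intervals with $A_\gamma$ lying to the left of $B_\gamma$, in which case $\Theta$ is affine and the identical estimate reduces to an elementary computation from the concavity of $J$ and the AM–GM inequality.
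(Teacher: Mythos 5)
Your proof is correct, and its first half coincides exactly with the paper's argument: for $\nu$-almost every needle, $\mu_\gamma=\gamma_\#(J\,dt)$ with $J$ concave (Proposition \ref{nncprop} plus the nonnegative-curvature hypothesis), and since for $a\le b$ the affine reparametrization $s\mapsto\gamma((1-s)a+sb)$ is again a $\Gamma$-geodesic, it suffices to prove a one-dimensional inequality for the one-sided combination $\{(1-\lambda)a+\lambda b \mid a\in A_\gamma,\ b\in B_\gamma,\ a\le b\}$. Where you genuinely diverge is in how that one-dimensional statement is established: the paper stops at this point and cites \cite[Section 3.2]{AK}, which combines conclusion (iii) of Theorem \ref{needledecomp} with a variant of the one-dimensional Borell--Brascamp--Lieb inequality, whereas you prove the needed variant from scratch by monotone transport --- the positive-end inequality \eqref{MBhoroendsdecomp}, normalized through \eqref{eq_1247}, yields stochastic dominance, hence the quantile map $T$ satisfies $T(a)\ge a$ (so $\Theta=(1-\lambda)\,\mathrm{id}+\lambda T$ lands in $\gamma^{-1}(\cM_\Gamma(A,B;\lambda))$), and concavity of $J$, the Jacobian identity and AM--GM finish the computation. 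This is the classical transport proof of one-dimensional BBL, so the mathematical content of the two routes agrees; yours buys self-containedness and makes explicit how the irreversibility of the spray (only pairs $a\le b$ being admissible) is compensated by \eqref{MBhoroendsdecomp}, at the price of the regularity caveats you correctly flag: a.e.\ differentiability of $T$, a.e.\ validity of $T'(a)J(T(a))=\tfrac{\mu_\gamma(B)}{\mu_\gamma(A)}J(a)$, and the change-of-variables inequality for $\Theta(A_\gamma)$, all standard for monotone maps between absolutely continuous measures on $\RR$, with the inequality going the right way even when $T$ has a singular part. Two minor points: $\mu_\gamma(S)=\int_{\gamma^{-1}(S)}J\,dt$ is simply the definition of pushforward, so neither simplicity of the spray nor injectivity of $\gamma$ is needed there; and your closing suggestion to ``reduce to the case where $A_\gamma$ and $B_\gamma$ are intervals'' is not an obvious reduction (the weight $J$ is not translation-invariant), so the transport argument, not that shortcut, should be regarded as your actual proof.
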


\begin{proof}
	For $\nu$-almost any $\gamma \in \Lambda$, the measure $\mu_\gamma$ is a $(\Gamma,\omega)$-Jacobi needle along $\gamma$. From the definition of a $(\Gamma,\omega)$-Jacobi needle and the assumption that $\Gamma$ is nonnegatively curved with respect to $\omega$, it follows that there exists an interval $I\subseteq \RR$ and a measure $m$ on $I$ with a concave density, such that $\mu_\gamma = \gamma_\#m$. From the definition of $\cM_\Gamma(A,B;\lambda)$, it suffices to prove that
	$$m(\cM(\tilde A,\tilde B;\lambda))^{1/2} \ge (1-\lambda)\cdot m(\tilde A)^{1/2} + \lambda\cdot m(\tilde B)^{1/2},$$
	where $\tilde A := \gamma^{-1}(A)$, $\tilde B := \gamma^{-1}(B)$ and
	$$\cM(\tilde A,\tilde B;\lambda) : = \{(1-\lambda)a + \lambda b \mid a \in \tilde A, b \in \tilde B, a \le b\}.$$
	From this point the proof is identical to the proof in \cite[Section 3.2]{AK}, using conclusion (iii) of Theorem \ref{needledecomp} together with a variant of the one-dimensional Borell-Brascamp-Lieb inequality.
\end{proof}

We now integrate inequality \eqref{muconcavity} over $\gamma$ to obtain inequality \eqref{sprayBM}. By  \eqref{horodisinteqn}, \eqref{muconcavity} and \eqref{eq_1247},

\begin{align}
\begin{split}\label{finalcomputation}
	\mu(\cM_\Gamma(A,B;\lambda)) & \stackrel{\text{\eqref{horodisinteqn}}}{=} \int_{\Lambda}\mu_\gamma\left(\cM_\Gamma(A,B;\lambda)\right)d\nu(\gamma)\\
	& \stackrel{\text{\eqref{muconcavity}}}{\ge} \int_{\Lambda}\left((1-\lambda)\,\mu_\gamma(A)^{1/2} + \lambda\,\mu_\gamma(B)^{1/2}\right)^2d\nu(\gamma)\\
	& = \int_\Lambda\mu_\gamma(A)\left((1-\lambda) + \lambda\,\left(\frac{\mu_\gamma(B)}{\mu_\gamma(A)}\right)^{1/2}\right)^2d\nu(\gamma)\\
	& \stackrel{\text{\eqref{eq_1247}}}{=} \int_{\Lambda}\mu_\gamma(A)\left((1-\lambda) + \lambda\,\left(\frac{\mu(B)}{\mu(A)}\right)^{1/2}\right)^2d\nu(\gamma)\\
	& \stackrel{\text{\eqref{horodisinteqn}}}{=} \mu(A)\left((1-\lambda) + \lambda\,\left(\frac{\mu(B)}{\mu(A)}\right)^{1/2}\right)^2\\
	& = \left((1-\lambda) \, \mu(A)^{1/2} + \lambda\,\mu(B)^{1/2}\right)^2,
\end{split}
\end{align}
and \eqref{sprayBM} is proved.
\end{proof}

One can use the same proof to show that for a two-dimensional, pFm weighted spray space, and for $N\in [2,\infty)$ the $CD(0,N)$ condition from Remark \ref{CDrmk} implies the Brunn-Minkowski inequality with exponent $1/N$.

\begin{theorem}\label{sprayBMCDthm}
	Let $(M,\Gamma,\omega)$ be a two-dimensional weighted spray space satisfying the $CD(0,N)$ condition from Remark \ref{CDrmk} for some $N \in [2,\infty)$. Suppose that $\Gamma$ is projectively Finsler-metrizable. Then for every nonempty Borel subsets $A,B \subseteq M$ of positive measure and every $0<\lambda<1$,
	\begin{equation}\label{sprayBMCD}\tag{BM}
			\mu(\cM_\Gamma(A,B;\lambda))^{1/N} \ge (1-\lambda)\cdot\mu(A)^{1/N} + \lambda\cdot\mu(B)^{1/N}
	\end{equation}
	where $\mu$ is the Borel measure induced by the volume form $\omega$.
\end{theorem}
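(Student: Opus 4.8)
The plan is to run the argument of Theorem \ref{sprayBMthm} essentially verbatim, with the exponent $1/2$ replaced by $1/N$ and the concavity of the needle densities replaced by $\tfrac{1}{N-1}$-concavity. First I would reduce to the case that $A$ and $B$ are compact (hence of finite measure) by the same approximation argument, observe that $\cM_\Gamma(A,B;\lambda)$ is measurable via the identical projection argument, and apply the needle decomposition Theorem \ref{needledecomp} with $\rho_1 = \chi_A/\mu(A)$ and $\rho_2 = \chi_B/\mu(B)$ to obtain a family $\{\mu_\gamma\}_{\gamma\in\Lambda}$ and a measure $\nu$ satisfying the disintegration \eqref{horodisinteqn} and the mass-balance \eqref{MBhorodecomp}. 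Exactly as in \eqref{eq_1247}, the mass balance gives $\mu_\gamma(A)/\mu(A) = \mu_\gamma(B)/\mu(B)$ for $\nu$-almost every $\gamma$ with $0 < \mu_\gamma(A) < \infty$.

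The one substantive change is in the one-dimensional step replacing Lemma \ref{lem_924}. For $\nu$-almost every $\gamma$ the measure $\mu_\gamma$ is a $(\Gamma,\omega)$-Jacobi needle, so $\mu_\gamma = \gamma_\#(J(t)\,dt)$ with $J(t) = \omega(\dot\gamma(t),S(t))$ for a transversal Jacobi field $S$. By the $CD(0,N)$ hypothesis together with the derivation in Remark \ref{CDrmk}, this $J$ satisfies \eqref{CDKNeq} with $r=0$, i.e. $J''/J - \tfrac{N-2}{N-1}(J'/J)^2 \le 0$ on $\{J > 0\}$. A direct computation with $h = J^{1/(N-1)}$ shows that, since $N \ge 2$, this is precisely the statement that $J^{1/(N-1)}$ is concave (recovering plain concavity when $N=2$). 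Pushing forward to the parametrizing interval $I$, the needle inequality reduces to the one-dimensional claim that, for a measure $m$ on $I$ whose density has concave $\tfrac{1}{N-1}$-th power and for $\tilde A := \gamma^{-1}(A)$, $\tilde B := \gamma^{-1}(B)$,
$$m(\cM(\tilde A,\tilde B;\lambda))^{1/N} \ge (1-\lambda)\,m(\tilde A)^{1/N} + \lambda\,m(\tilde B)^{1/N}.$$
This is the one-dimensional Borell-Brascamp-Lieb inequality with exponent $1/N$, and, just as in Lemma \ref{lem_924}, the treatment of the endpoints of $I$ (where $J$ may fail to be defined or bounded) is handled by the end-mass-balance inequality \eqref{MBhoroendsdecomp}, following \cite[Section 3.2]{AK}.

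It then remains to integrate. The one-dimensional inequality yields, for $\nu$-almost every $\gamma$,
$$\mu_\gamma(\cM_\Gamma(A,B;\lambda)) \ge \left((1-\lambda)\,\mu_\gamma(A)^{1/N} + \lambda\,\mu_\gamma(B)^{1/N}\right)^N.$$
Substituting $\mu_\gamma(B)^{1/N} = \mu_\gamma(A)^{1/N}\,(\mu(B)/\mu(A))^{1/N}$ from mass balance, the bracketed factor becomes $\mu_\gamma(A)^{1/N}$ times the $\gamma$-independent constant $(1-\lambda) + \lambda(\mu(B)/\mu(A))^{1/N}$; pulling this constant out of the integral and using the disintegration \eqref{horodisinteqn} with $S = \cM_\Gamma(A,B;\lambda)$ and with $S = A$, exactly as in the chain of equalities leading to \eqref{finalcomputation}, gives
$$\mu(\cM_\Gamma(A,B;\lambda)) \ge \left((1-\lambda) + \lambda\left(\tfrac{\mu(B)}{\mu(A)}\right)^{1/N}\right)^{N}\mu(A) = \left((1-\lambda)\,\mu(A)^{1/N} + \lambda\,\mu(B)^{1/N}\right)^N,$$
which is \eqref{sprayBMCD}. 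The only place demanding genuine care, as in the case $N=2$, is the one-dimensional Borell-Brascamp-Lieb step and the correct handling of the needle endpoints through \eqref{MBhoroendsdecomp}; everything else is the formal substitution of $N$ for $2$, which goes through because mass balance makes the interpolation factor constant along the needle.
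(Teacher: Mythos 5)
Your proposal is correct and takes essentially the same approach as the paper: the paper's entire proof of Theorem \ref{sprayBMCDthm} is the remark that ``one can use the same proof'' as Theorem \ref{sprayBMthm}, and your write-up is exactly that adaptation. In particular, you correctly identify the only substantive change --- that the $CD(0,N)$ condition of Remark \ref{CDrmk} with $r=0$ is equivalent to concavity of $J^{1/(N-1)}$ for the needle densities, feeding into the one-dimensional Borell-Brascamp-Lieb inequality with exponent $1/N$ (with endpoints handled via \eqref{MBhoroendsdecomp}) --- while the mass-balance identity \eqref{eq_1247} still makes the interpolation factor constant along needles so that the integration step \eqref{finalcomputation} goes through verbatim with $2$ replaced by $N$.
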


We now prove a converse to Theorem \ref{sprayBMthm}. The end of the ensuing proof is similar to the proof that if a $1/2$-concave measure on the real line has a continuous density then this density is concave; this is an instance of a more general theorem about $s$-concave measures in $\RR^n$, see Borell \cite{Bo}. See also \cite[Theorem 3.17]{DJ}. The only difference, which is completely immaterial to the proof, is that in our case we only know inequality \eqref{Jintinequ} when $x_0 \le x_1$.

\begin{proposition}\label{conversecor}
	Let $(M,\Gamma,\omega)$ be a two-dimensional simple weighted spray space. Assume that \eqref{sprayBM} holds for every Borel nonempty $A,B \subseteq M$ and every $0 < \lambda < 1$. Then $(M,\Gamma,\omega)$ is nonnegatively curved.
\end{proposition}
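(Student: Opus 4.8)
The plan is to verify condition~1 of Proposition~\ref{nncprop}: for every $\Gamma$-geodesic $\gamma$ and every transversal $\Gamma$-Jacobi field $S$ along $\gamma$, the function $J(t)=\omega(\dot\gamma(t),S(t))$ is concave. Since $J$ is smooth, concavity is a local property, so it suffices to establish $J((1-\lambda)x_0+\lambda x_1)\ge(1-\lambda)J(x_0)+\lambda J(x_1)$ for $x_0\le x_1$ ranging in a neighborhood of an arbitrary fixed parameter $t_*$, the remaining case $x_0>x_1$ following by interchanging the two points. The strategy is to feed into \eqref{sprayBM} a carefully chosen pair of thin two-dimensional sets built from $S$, read off a one-dimensional inequality for the measure $J(t)\,dt$, and finally invoke the one-dimensional principle that a $\tfrac{1}{2}$-concave measure has a concave density.

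To build the sets I would realize $S$ as the variation field of a variation $F(s,t)$ through $\Gamma$-geodesics with $F(0,\cdot)=\gamma$ and $\partial_s F(0,\cdot)=S$. Because $S$ is transversal, $J>0$, so $\Psi\colon(s,t)\mapsto F(s,t)$ is a diffeomorphism near $s=0$, and in the coordinates $(s,t)$ the measure $\mu$ has density $J(t)(1+O(s))$. For small $\eps,\delta_0,\delta_1>0$ set $A_\eps=\Psi([0,\eps]\times[x_0,x_0+\delta_0])$ and $B_\eps=\Psi([0,\eps]\times[x_1,x_1+\delta_1])$, so that $\mu(A_\eps)=\eps\int_{x_0}^{x_0+\delta_0}J+o(\eps)$ and $\mu(B_\eps)=\eps\int_{x_1}^{x_1+\delta_1}J+o(\eps)$ as $\eps\to0$. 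The essential point is to estimate $\mu(\cM_\Gamma(A_\eps,B_\eps;\lambda))$. Since $\Gamma$ is simple, through $\Psi(s_1,a)\in A_\eps$ and $\Psi(s_2,b)\in B_\eps$ there passes a unique $\Gamma$-geodesic, and by Theorem~\ref{white} its $\lambda$-point depends in a $C^1$ fashion on the four parameters, with first-order expansion $\gamma(c)+s_1 Y_0(\lambda)+s_2 Y_1(\lambda)+o(\eps)$, where $c=(1-\lambda)a+\lambda b$ and $Y_0,Y_1$ are the $\Gamma$-Jacobi fields along the connecting geodesic determined by $Y_0(0)=S(a),\,Y_0(1)=0$ and $Y_1(0)=0,\,Y_1(1)=S(b)$.

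Two observations make the estimate work. First, $Y_0+Y_1$ is the Jacobi field with boundary values $S(a),S(b)$, hence equals the affinely reparametrized restriction of $S$ to $[a,b]$; consequently $Y_0(\lambda)+Y_1(\lambda)=S(c)$, so the diagonal geodesics ($s_1=s_2$) already sweep out exactly the coordinate box $[0,\eps]\times[c_0,c_1]$, where $[c_0,c_1]=[(1-\lambda)x_0+\lambda x_1,\,(1-\lambda)(x_0+\delta_0)+\lambda(x_1+\delta_1)]$. Second --- and this is where localizing near $t_*$ is used --- when $x_0,x_1$ are close the connecting geodesic is short, so $Y_0(\lambda)\approx(1-\lambda)S(c)$ and $Y_1(\lambda)\approx\lambda S(c)$; in particular their $S(c)$-components are positive. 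Writing the $\lambda$-point in the coordinates $(s,t)$ and recording its transversal ($s$) coordinate as $s_1\sigma_0+s_2\sigma_1$ with $\sigma_0,\sigma_1>0$ and $\sigma_0+\sigma_1=1$, this positivity confines the off-diagonal geodesics ($s_1\ne s_2$) to transversal coordinates again lying in $[0,\eps]$, so they do not inflate the transversal extent. Hence $\cM_\Gamma(A_\eps,B_\eps;\lambda)$ coincides to leading order with the box $[0,\eps]\times[c_0,c_1]$ and $\mu(\cM_\Gamma(A_\eps,B_\eps;\lambda))=\eps\int_{c_0}^{c_1}J+o(\eps)$. Substituting the three asymptotics into \eqref{sprayBM}, dividing by $\eps^{1/2}$ and letting $\eps\to0$ yields
\begin{equation}\label{Jintinequ}
\left(\int_{c_0}^{c_1}J\right)^{1/2}\ge(1-\lambda)\left(\int_{x_0}^{x_0+\delta_0}J\right)^{1/2}+\lambda\left(\int_{x_1}^{x_1+\delta_1}J\right)^{1/2},
\end{equation}
valid for all small $\delta_0,\delta_1>0$ and $x_0\le x_1$ near $t_*$. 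This is the statement that $J(t)\,dt$ is $\tfrac{1}{2}$-concave. Letting $\delta_0,\delta_1\to0$ with the ratio $\delta_0/\delta_1$ free and optimizing over it upgrades \eqref{Jintinequ} to the concavity $J(c)\ge(1-\lambda)J(x_0)+\lambda J(x_1)$ of the density, by the classical argument for $s$-concave measures (Borell \cite{Bo}); note that equal lengths $\delta_0=\delta_1$ alone would yield only concavity of $\sqrt{J}$, so the freedom in the ratio is indispensable. This gives condition~1 of Proposition~\ref{nncprop}.

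The main obstacle is precisely the upper bound on $\mu(\cM_\Gamma(A_\eps,B_\eps;\lambda))$: because \eqref{sprayBM} only lower-bounds this quantity, the whole argument collapses if the ``cross'' geodesics joining $\Psi(s_1,a)$ to $\Psi(s_2,b)$ with $s_1\ne s_2$ were to enlarge the set by a definite factor. The positivity of the $S(c)$-components of $Y_0(\lambda),Y_1(\lambda)$ --- which holds exactly for short connecting geodesics --- is what prevents this, and is the reason the concavity of $J$ is extracted locally around each $t_*$ rather than globally.
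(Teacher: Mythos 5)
Your proposal is correct and is essentially the paper's own proof: the same thin coordinate boxes $F([0,\eps]\times[x_i,x_i+\delta_i])$ built from a variation realizing $S$, the same measure asymptotics producing the one-dimensional inequality \eqref{Jintinequ}, and the same Borell-type step of exploiting the freedom in the ratio $\delta_0/\delta_1$ (the paper substitutes $\ell_j = J(x_j)$ after Taylor expansion) to upgrade $1/2$-concavity of the measure $J(t)\,dt$ to concavity of the density $J$. The only difference is how the key containment of $\cM_\Gamma(A_\eps,B_\eps;\lambda)$ in a slightly enlarged box is justified --- you use a first-order Jacobi-field expansion with positivity of the coefficients $\sigma_0,\sigma_1$, which forces you to localize near $t_*$, whereas the paper argues via uniqueness and smooth dependence of $\Gamma$-geodesics on their endpoints for arbitrary $x_0 \le x_1$; note that simplicity (absence of conjugate points) actually gives $\sigma_0,\sigma_1>0$ along connecting geodesics of any length, so your localization is harmless but not needed.
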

\begin{proof}
	
	Let $\gamma$ be a $\Gamma$-geodesic and let $S$ be a transversal $\Gamma$-Jacobi field along $\gamma$. Let $F(s,t):[-\delta_0,\delta_0]\times I \to M$ be a variation through curves of $\Gamma$ which induces the $\Gamma$-Jacobi field $S$ along $\gamma$. By transversality of $S$, we may take $F$ to be a diffeomorphism. Denote
    $$S = F_*(\partial/\partial s), \qquad T = F_*(\partial/\partial t)$$
    (which, by our choice of $F$, is consistent with the previous definition of $S$ at $s=0$). By the definition of a nonpositively curved weighted spray, we should prove that the function
    $$J(t) := \omega(\dot\gamma(t),S(t)) = \omega(T,S)\vert_{s=0}$$
    is concave. Let $[x_0,x_0 + \ell_0]$ and $[x_1,x_1 + \ell_1]$ be subintervals of $I$ with $x_0 + \ell_0 \le x_1$ and let $0<\lambda<1$. Choose $0 < \delta < \delta_0$ and set
    $$A : = F([0,\delta]\times [x_0,x_0 + \ell_0]), \qquad B : = F([0,\delta]\times [x_1,x_1 + \ell_1]).$$
    Since $F^*\omega = \omega(T,S) \, dt\wedge ds$, we have
    \begin{equation}\label{VolAVolB}
    \mu(A) = \delta \, \int_{x_0}^{x_0 + \ell_0}J(t)dt + o(\delta), \qquad \text{ and } \qquad \mu(B) = \delta \, \int_{x_1}^{x_1 + \ell_1}J(t)dt + o(\delta).
    \end{equation}
    Moreover, since $\Gamma$ is simple, there is a unique $\Gamma$-geodesic joining any two points in $F([0,\delta]\times I)$, which depends smoothly on its endpoints. Uniqueness implies that we can choose $\delta$ small enough that $\cM_\Gamma(A,B;\lambda)\subseteq F([0,\delta] \times I)$, because a $\Gamma$-geodesic joining $A$ to $B$ will not intersect $\partial F([0,\delta]\times I)$ twice. The smooth dependence of a $\Gamma$-geodesic on its endpoints implies that, for every $\eps > 0 $, we may choose $\delta$ small enough that
    $$\cM_\Gamma(A,B;\lambda) \subseteq F([0,\delta] \times [x_\lambda - \eps, x_\lambda + \ell_\lambda + \eps])$$
    where
    $$x_\lambda = (1-\lambda)x_0 + \lambda x_1 \qquad \text{ and } \qquad \ell_\lambda = (1-\lambda)\ell_0 + \lambda\ell_1.$$
    Therefore
    \begin{align}\label{VolABlambda}
    \begin{split}
    \mu(\cM_\Gamma(A,B;\lambda)) & \le \mu(F\left([0,\delta]\times [x_\lambda - \eps, x_\lambda + \ell_\lambda + \eps]\right)\\
    & = \delta \int_{x_\lambda - \eps}^{x_\lambda + \ell_\lambda + \eps}J(t)dt + o(\delta).
    \end{split}
    \end{align}
    Combining \eqref{VolAVolB}, \eqref{VolABlambda} and \eqref{sprayBM}, and taking $\delta,\eps \to 0$, we get
    \begin{equation}\label{Jintinequ}\left(\int_{x_\lambda}^{x_\lambda + \ell_\lambda}J(t)dt\right)^{1/2} \ge (1-\lambda) \cdot \left(\int_{x_0}^{x_0 + \ell_0}J(t)dt\right)^{1/2} + \lambda \cdot \left(\int_{x_1}^{x_1 + \ell_1}J(t)dt\right)^{1/2}\end{equation}
    for all $x_0,x_1$ and $\ell_0,\ell_1$ as above. By considering the first-order Taylor approximations of the integrals in \eqref{Jintinequ}, we see that if $\ell_0,\ell_1$ are sufficiently small then
    $$\left(\ell_\lambda J(x_\lambda)\right)^{1/2} \ge (1-\lambda)\left(\ell_0 J(x_0)\right)^{1/2} + \lambda\left(\ell_1 J(x_1)\right)^{1/2}.$$
    But since both sides are homogeneous in the $\ell_j$, the above inequality holds for all $\ell_0,\ell_1 \ge 0$. Set $\ell_j = J(x_j)$. Then
    $$\Big(\big((1-\lambda)J(x_0) + \lambda J(x_1)\big)J(x_\lambda)\Big)^{1/2} \ge (1-\lambda)J(x_0) + \lambda J(x_1),$$
    whence $J(x_\lambda) \ge (1-\lambda)J(x_0) + \lambda J(x_1).$ Since $x_0,x_1$ are arbitrary, we conclude that $J$ is concave.
\end{proof}

\begin{corollary}\label{simplemetricBMcor}
	Let $(M,g)$ be a Riemannian surface, let $\Gamma = E_1 + {k} V$ be a simple, proper metric spray on $M$ and let $\omega = e^{-\vphi}\omega_g$ be a volume form on $M$. The following are equivalent:
	\begin{enumerate}
		\item The spray $\Gamma$ is magnetic with respect to the metric $e^{-4\vphi}g$, and 
		\begin{align}\label{nncRiemeq2}K + {k}^2  +\Gamma(d\vphi) - (d\vphi)^2 - E_2{k} \ge 0.\end{align}
		In particular, if $\omega = \omega_g$ then $\Gamma$ is magnetic with repect to $g$ and
		\begin{equation}\label{nncRiemeqmag}K + \kappa^2 - |\nabla\kappa| \ge 0,\end{equation}
		where $\kappa$ is the geodesic curvature function of $\Gamma$.
		\item Inequality \eqref{sprayBM} holds for every Borel, nonempty subsets $A,B \subseteq M$ and every $0 < \lambda < 1$.
	\end{enumerate}
\end{corollary}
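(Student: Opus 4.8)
The plan is to assemble the corollary from three results already established: Corollary \ref{nncriemcor} identifies condition 1 with nonnegative curvature of the weighted spray space, Proposition \ref{conversecor} extracts nonnegative curvature from the Brunn-Minkowski inequality, and Theorem \ref{sprayBMthm} produces the inequality from nonnegative curvature once the spray is projectively Finsler-metrizable. By Corollary \ref{nncriemcor}, condition 1 is equivalent to the statement that $(M,\Gamma,\omega)$ is nonnegatively curved (the displayed special case $\omega=\omega_g$ being the ``in particular'' clause of that corollary), so it suffices to show that for a simple proper metric spray nonnegative curvature is equivalent to \eqref{sprayBM}. The direction $2 \implies 1$ is then immediate: $\Gamma$ is simple by hypothesis, so Proposition \ref{conversecor} yields that $(M,\Gamma,\omega)$ is nonnegatively curved, which is condition 1.

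For $1 \implies 2$ I assume $(M,\Gamma,\omega)$ is nonnegatively curved; in particular, by condition 1, $\Gamma$ is magnetic with respect to $\tilde g := e^{-4\vphi}g$. By inner regularity it is enough to prove \eqref{sprayBM} for compact sets $A,B$ of positive measure, after which the general positive-measure case follows by approximating $A,B$ from inside by compacta $A_n, B_n$ and using $\cM_\Gamma(A_n,B_n;\lambda)\subseteq\cM_\Gamma(A,B;\lambda)$. Given compact $A,B$, let $W$ be the $\Gamma$-convex hull of $A\cup B$; since $\Gamma$ is proper, $\overline W$ is compact, and it contains $\cM_\Gamma(A,B;\lambda)$ for every $\lambda$. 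Applying Proposition \ref{metrizabilityppn} to the compact set $\overline W$ and the metric $\tilde g$ (with respect to which $\Gamma$ is magnetic) produces an open $\Gamma$-convex set $U\supseteq\overline W$ on which $\Gamma$ is projectively Finsler-metrizable. The restricted triple $(U,\Gamma|_U,\omega|_U)$ is a two-dimensional weighted spray space that is projectively Finsler-metrizable, nonnegatively curved (nonnegative curvature is the pointwise condition of Proposition \ref{nncprop} on $TM$, hence inherited by restriction), and simple: for $x\in U$, the $\Gamma$-convexity of $U$ together with simplicity of $\Gamma$ on $M$ shows that $\exp^\Gamma_x$ and $\ola{\exp}^\Gamma_x$ restrict to $C^1$ diffeomorphisms onto $U$. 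Theorem \ref{sprayBMthm} applied to $(U,\Gamma|_U,\omega|_U)$ then gives \eqref{sprayBM} for $A,B$, since $A$, $B$ and $\cM_\Gamma(A,B;\lambda)$ all lie in $U$ and $\omega|_U$ restricts correctly.

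It remains to handle the case where one set, say $A$, is nonempty but has measure zero. Fixing $a\in A$ we have $\cM_\Gamma(\{a\},B;\lambda)\subseteq\cM_\Gamma(A,B;\lambda)$, so it suffices to show $\mu(\cM_\Gamma(\{a\},B;\lambda))\ge \lambda^2\mu(B)$. In the geodesic polar coordinates $(r,\theta)$ induced by $\exp^\Gamma_a$, the map $b\mapsto\gamma(\lambda)$ sending $b$ to the point at parameter $\lambda$ on the $\Gamma$-geodesic $\gamma$ from $a$ to $b$ becomes $(r,\theta)\mapsto(\lambda r,\theta)$, while $\mu$ has density $J_\theta(r)\,dr\,d\theta$, where $J_\theta(r)=\omega(\dot\gamma,S)$ for the angular transversal Jacobi field $S$ vanishing at $a$. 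A change of variables reduces the claim to the pointwise inequality $J_\theta(\lambda r)\ge \lambda J_\theta(r)$, which follows from concavity of $J_\theta$ (nonnegative curvature) together with $J_\theta(0)=0$; the remaining case $\mu(A)=\mu(B)=0$ is trivial.

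The main obstacle is the localization step in $1 \implies 2$: Theorem \ref{sprayBMthm} is stated for globally metrizable sprays, whereas Proposition \ref{metrizabilityppn} only guarantees metrizability on $\Gamma$-convex neighborhoods of compact sets. Properness of $\Gamma$ is exactly what renders this harmless, since it confines $\cM_\Gamma(A,B;\lambda)$ to a precompact $\Gamma$-convex region $U$ on which a metrizing (geodesically convex, Randers-type) Finsler structure exists, so that the needle decomposition and the one-dimensional argument of Theorem \ref{sprayBMthm} can be carried out there and read off as an inequality about $\mu$ alone.
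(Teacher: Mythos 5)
Your proposal follows the paper's own proof essentially step for step: the implication $2 \Rightarrow 1$ via Proposition \ref{conversecor} and Corollary \ref{nncriemcor}; the implication $1 \Rightarrow 2$ by reducing to compact sets, invoking properness and Proposition \ref{metrizabilityppn} to obtain a $\Gamma$-convex open set $U$ on which $\Gamma$ is projectively Finsler-metrizable, and then applying Theorem \ref{sprayBMthm} inside $U$; and the degenerate case $\mu(A)=0<\mu(B)$ by contracting $B$ toward a point $a\in A$ in exponential polar coordinates, using concavity of the Jacobian together with its vanishing at $r=0$. Your verification that the restriction of $\Gamma$ to $U$ remains simple and nonnegatively curved is a welcome elaboration of the paper's terse ``we may assume without loss of generality that $M=U$''.

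There is, however, one genuine (though small) gap: the case $\mu(B)=0<\mu(A)$. You dispose of the degenerate cases with ``one set, say $A$, is nonempty but has measure zero'', which tacitly assumes that the roles of $A$ and $B$ are interchangeable. They are not: $\cM_\Gamma(A,B;\lambda)$ is defined by $\Gamma$-geodesics running from $A$ at time $0$ to $B$ at time $1$, and a $\Gamma$-geodesic traversed backwards need not be a $\Gamma$-geodesic (Example \ref{noncchex} in the paper exploits exactly this asymmetry). Your cone argument is anchored at $a\in A$ via the forward exponential map and a Jacobi field vanishing at $t=0$, so it proves $\mu(\cM_\Gamma(A,B;\lambda))\ge\lambda^2\mu(B)$ only when the null set sits at the $t=0$ end. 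When instead $\mu(B)=0$, one must contract $A$ toward a point $b\in B$ along geodesics ending at $b$. The paper does this by replacing $\Gamma$ with the reversed spray, whose geodesics are $\Gamma$-geodesics run backwards: this spray is again metric, proper, nonnegatively curved, and simple (this is precisely why simplicity was defined to require both $\exp^\Gamma_x$ and $\ola{\exp}^\Gamma_x$ to be diffeomorphisms), and $\cM_\Gamma(A,B;\lambda)$ coincides with the $(1-\lambda)$-average of the pair $(B,A)$ for the reversed spray, so the already-proved case applies. Without this step (or an equivalent argument phrased via $\ola{\exp}^\Gamma_b$), your proof covers only part of statement 2.
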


\begin{proof}
	Assume that $1$ holds. By Corollary \ref{nncriemcor}, the weighted spray space $(M,\Gamma,\omega)$ is nonnegatively curved. Let $A,B$ be Borel, nonempty subsets of $M$. By an approximation argument, we may assume that both are compact. Since $\Gamma$ is proper, there exists a $\Gamma$-convex open set $U$ containing $A$ and $B$, and by Proposition \ref{metrizabilityppn}, the restriction of $\Gamma$ to $U$ is Finsler-metrizable; we may assume without loss of generality that $M = U$. If $A,B$ have positive measure, inequality \eqref{sprayBM} then follows from Theorem \ref{sprayBMthm}. 

	\medskip
	Suppose that one of the sets has zero measure. In the case where $\mu(A) = \mu(B) = 0$, inequality (\ref{sprayBM}) holds trivially.
	Suppose that $\mu(A) = 0$ but $\mu(B) > 0$. Since $A$ is non-empty, we may pick a point $a \in A$. For $t > 0$ define a map $H_t : M \to M$ by
	\begin{equation}\label{Htdef}
		H_t(x) = \exp_a\left(t \cdot \exp_a^{-1}(x)\right),
	\end{equation}
	where $\exp := \exp^\Gamma$, and observe that 
	$$\cM_\Gamma(A,B;\lambda) \supseteq \cM_\Gamma(\{a\},B;\lambda) = H_\lambda(B).$$
	Thus, since $A$ has measure zero, in order to prove \eqref{sprayBM} it suffices to show that for $0<\lambda<1$,
	$$\mu(H_{\lambda}(B)) \ge \lambda^2 \cdot \mu(B).$$
	Let $\tilde\omega = \exp_a^*\omega$ and let $\tilde\mu$ be the Borel measure on $T_aM$ induced by the volume form $\tilde\omega$. Making a change of variables via the map $\exp_a$, which is a $C^1$ diffeomorphism since $\Gamma$ is simple, and using \eqref{Htdef}, the last inequality becomes
	$$\tilde\mu(\lambda\tilde B) \ge \lambda^2 \cdot \tilde\mu(\tilde B),$$
	where $\tilde B := \exp_a^{-1}(B) \subseteq T_aM$. Introduce polar coordinates $(r,\theta)$ on $T_aM$, and write $\tilde \omega = J(r,\theta)\,dr\wedge d\theta$ for a smooth function $J$ on $T_aM\setminus \{0\}$. By Fubini's theorem, it suffices to prove that for every $\theta$ and every measurable subset $S \subseteq \RR$,
	$$\int_{\lambda S} J(r,\theta)dr \ge \lambda^2 \, \int_S J(r,\theta)dr.$$

	If we set $T : = (\exp_a)_*\partial_r$ and $S : = (\exp_a)_*\partial_\theta$ then $J = \omega(T,S)$, and the vector field $S$ is a $\Gamma$-Jacobi field along each $\Gamma$-geodesic of the form $r \mapsto \exp_a((r\cos\theta,r\sin\theta))$ whose tangent is $T$. Thus the function $r \mapsto J(r,\theta)$ is concave for every $\theta$ since $(M,\Gamma,\omega)$ is nonnegatively curved. Moreover $J \to 0$ as $r \to 0$ since $d\exp_a$ is the identity at the origin. We therefore have
	$$\int_{\lambda S}J(r,\theta)dr = \lambda \int_S J(\lambda r,\theta)dr = \lambda\int_S J((1-\lambda)\cdot 0 + \lambda \cdot r,\theta)dr \ge \lambda^2\int_S J(r,\theta)dr,$$
	as desired. The case $\mu(B) = 0$ and $\mu(A)>0$ follows by reversing the spray $\Gamma$ (i.e. replacing it with the spray whose geodesics are $\Gamma$-geodesics traversed backwards) and applying the previous case. This finishes the proof of the implication 1 $\implies$ 2. 
	
	\medskip
	The implication 2 $\implies$ 1 follows from Proposition \ref{conversecor} and Corollary \ref{nncriemcor}.
\end{proof}

\section{Examples}\label{examplesec}
In this section we give some examples of two-dimensional weighted spray spaces satisfying \eqref{sprayBM}.

\medskip
Consider the case of constant curvature $K\equiv \mathrm{const}$, and $\omega = \omega_g$. By Corollary \ref{simplemetricBMcor}, for a simple, proper metric spray, the Brunn-Minkowski inequality \eqref{sprayBM} holds if and only if the spray is magnetic with respect to the metric of constant curvature $g_K$ and the geodesic curvature function $\kappa$ satisfies \eqref{nncRiemeqmag}. Define 
    \begin{align*}
    \cot_Kx : = 
    \begin{cases}
        \sqrt{K}\cot(\sqrt{K}x) & K > 0,\\
        1/x & K = 0,\\
        \sqrt{-K}\coth(\sqrt{-K}x) & K < 0.
    \end{cases}
    \end{align*}
    Then \eqref{nncRiemeqmag} holds if and only if either $\kappa = \cot_K(f)$, where $f$ is smooth, 1-Lipschitz function on $M$, or $K \le 0$ and $\kappa \equiv \sqrt{-K}$. 
    \begin{example}[Horocycles]\normalfont
	    On the hyperbolic plane we can take $\kappa \equiv 1$. The resulting spray has as its geodesics constant-speed horocycles. Inequality \eqref{sprayBM} for this spray was proved in \cite{AK}. Since this spray is simple, metric and proper, \eqref{sprayBM} for horocycles follows from Corollary \ref{simplemetricBMcor}. 
	\end{example}
	\begin{example}[Norwich Spirals]\normalfont
	    Take $M = \RR^2\setminus\{0\}$ with the flat metric $g_0 = dx^2 + dy^2$ and $\kappa = 1/r$ where $r = \sqrt{x^2 + y^2}$. The geodesics of the corresponding spray are either circles centered at the origin, or so-called Norwich spirals \cite{Zw} which are curves of the form
	    \begin{equation}\label{norwich}\gamma_{a,b}(t) =  a(t^2 + 1)e^{i(t - 2\arctan t + b)}, \qquad t \in \RR\end{equation}
	    for $a > 0$ and $b \in (-\pi,\pi]$, reparametrized to have constant speed. This spray is not simple (its geodesics self-intersect), but it is projectively Finsler-metrizable by the Randers metric
	    $$\cF = \sqrt{dx^2 + dy^2} - rd\theta.$$
	    Indeed, $d(rd\theta) = dr\wedge d\theta = (1/r)(r\,dr\wedge d\theta) = (1/r)\,\omega_{g_0}$. The parametrization \eqref{norwich} is in fact proportional to arclength with respect to $\cF$, hence completeness (and in particular geodesic convexity) of the metric follows from completeness of the flow of the spray and the Hopf-Rinow theorem. Thus by Theorem \ref{sprayBMthm}, this weighted spray space satisfies \eqref{sprayBM}. Analogues of the Norwich spirals exist on the (punctured) sphere and hyperbolic plane and also satisfy \eqref{sprayBM}.
	\end{example}
	\begin{example}[Seiffert Spirals]\normalfont
	    Take $M = S^2 \subseteq \RR^3$ with the round metric $g_1$. 
	   	Consider the magnetic spray on $S^2$ whose geodesic curvature function is $\kappa = z$, where $z$ is the third coordinate in $\RR^3$. The geodesics of this spray which pass through the poles are known as Seiffert spirals \cite{Er}. If $\phi$ denotes spherical distance from the north pole, then $z = \cos(\phi) = \cot(f)$ where $f = \arccot(\cos(\phi))$ is a smooth, 1-Lipschitz function on $S^2$. Thus by the discussion above, \eqref{nncRiemeqmag} is satisfied. If $\phi,\theta$ are spherical coordinates, then the 1-form $\eta = (\sin^2\phi/2)d\theta$ satisfies $d\eta = \cos\phi \, \sin\phi \, d\phi \wedge d\theta = \cos\phi \, \omega_{g_1}$, hence the geodesic spray of the Randers metric $\sqrt{g_1} - \eta$ is projectively equivalent to this spray. This metric is geodesically convex by compactness and the Hopf-Rinow theorem. It thus follows from Theorem \ref{sprayBMthm} and Corollary \ref{nncriemcor} that this spray satisfies \eqref{sprayBM}. 
   \end{example}
	\begin{example}[Circular arcs]\normalfont\label{circex}
		Let $0 < r \le R$, let $D \subseteq \RR^2$ be an open disc of radius $r$ and consider the spray on $D$ whose geodesics are arcs of circles of radius $R$, parametrized proportionally to Euclidean arclength. This spray is simple, metric and magnetic (with respect to the Euclidean metric) and inequality \eqref{nncRiemeqmag} holds (with $K\equiv 0$ and $\kappa \equiv 1/R$). Thus this spray satisfies \eqref{sprayBM} by Corollary \ref{simplemetricBMcor}.
	\end{example}
	\begin{example}[Perturbation]\normalfont
		Under the assumptions and notation of Corollary \ref{simplemetricBMcor}, if inequality \eqref{nncRiemeq2} is strict, then a $C^1$-small perturbation of $k$ and/or a $C^2$-small perturbation of $\vphi$ will preserve inequality \eqref{sprayBM}, as long as the perturbed spray is still simple. For example, a simple metric spray on a spherical cap which is $C^1$-close to the geodesic spray will satisfy \eqref{sprayBM} with respect to the standard area measure, and the circular spray from Example \ref{circex} will satisfy \eqref{sprayBM} with respect to a $C^2$-small density on $D$.
	\end{example}

\end{document}